\newtheorem{theorem}{Theorem}[section]
\newtheorem{lemma}[theorem]{Lemma}
\newtheorem{propos}[theorem]{Proposition}
\newtheorem{corol}[theorem]{Corollary}
\theoremstyle{remark}
\newtheorem{definition}[theorem]{Definition}
\numberwithin{equation}{section}
\newcommand{\R}{\mathbb{R}}
\newcommand{\Haus}{{\mathscr H}}
\newcommand{\Leb}{{\mathscr L}}
\newcommand{\res}{\mathop{\hbox{\vrule height 7pt width .5pt depth 0pt
\vrule height .5pt width 6pt depth 0pt}}\nolimits}
\newcommand{\trace}{{\rm Tr}}
\newcommand{\eps}{\varepsilon}
\renewcommand{\div}{{\rm div}\,}
\newcommand{\divy}{{\rm div}_y}
\newcommand{\divtx}{{\rm Div}\,}
\newcommand{\bvloc}{BV_{\mathrm{loc}}}
\newcommand{\weaks}{\stackrel{*}{\rightharpoonup}}
\newcommand{\loc}{\mathrm{loc}}
\newcommand{\tr}{\mathrm{Tr} \,}
\begin{document}

\title[IBVP for the continuity equation]{Initial-boundary value problems for continuity equations with $BV$ coefficients}

\author[G.~Crippa]{Gianluca Crippa}
\address{G.C. Departement Mathematik und Informatik,
Universit\"at Basel, Rheinsprung 21, CH-4051 Basel, Switzerland.}
\email{gianluca.crippa@unibas.ch}
\author[C.~Donadello]{Carlotta Donadello}
\address{C.D. Laboratoire de Math\'ematiques,
Universit\'e de Franche-Comt\'e, 16 route de Gray, F-25030 Besan\c con Cedex,
France.}
\email{Carlotta.Donadello@univ-fcomte.fr}
\author[L.~V.~Spinolo]{Laura V.~Spinolo}
\address{L.V.S. IMATI-CNR, via Ferrata 1, I-27100 Pavia, Italy.}
\email{spinolo@imati.cnr.it}
\maketitle
{
\rightskip .85 cm
\leftskip .85 cm
\parindent 0 pt
\begin{footnotesize}

{\sc Abstract.}
We establish well-posedness of initial-boundary value problems for continuity equations with $BV$ (bounded total variation) coefficients. We do not prescribe any condition on the orientation of the coefficients at the boundary of the domain. We also discuss some examples showing that, regardless the orientation of the coefficients at the boundary, uniqueness may be violated as soon as the $BV$ regularity deteriorates at the boundary. 


\medskip\noindent
{\sc Keywords:} Continuity equation, transport equation, initial-boundary value problem, 
low regularity coefficients, uniqueness. 

\medskip\noindent
{\sc MSC (2010):} 35F16.

\end{footnotesize}

}

\vspace{.3 cm}

\section{Introduction}
This work is devoted to the study of the initial-boundary value problem for the continuity equation
\begin{equation}
\label{e:cone}
    \partial_t u + \div (bu) = 0, \quad \text{where $b : \, ]0, T[ \times \Omega \to \R^d$ and $u : \,  ]0, T[ \times \Omega \to \R$.}
\end{equation}
In the previous expression, $\Omega \subseteq \R^d$ is an open set, $T>0$ is a real number and $\mathrm{div}$ denotes the divergence computed with respect to the space variable only. 

The analysis of~\eqref{e:cone} in the case when $b$ has low regularity has recently drawn considerable attention: for an overview of some of the main contributions, we refer to the lecture notes by Ambrosio and Crippa~\cite{AmbrosioCrippa}. Here, we only quote the two main breakthroughs due to DiPerna and Lions~\cite{diPernaLions} and to Ambrosio~\cite{Ambrosio:trabv}, which deal with the case when $\div b$ is bounded and $b$ enjoys Sobolev and $BV$ (bounded total variation) regularity, respectively.  More precisely, in~\cite{diPernaLions} and~\cite{Ambrosio:trabv} the authors establish existence and uniqueness results for the Cauchy problem posed by coupling~\eqref{e:cone} with an initial datum in the case when $\Omega= \R^d$. 

In the classical framework where $b$ and $u$ are both smooth up to the boundary, the initial-boundary value problem is posed by prescribing
\begin{equation}
\label{e:clpb}
\left\{ 
\begin{array}{lll}
          \partial_t u + \div (bu) = 0 & \text{in $]0, T [ \times \Omega $} \\ 
           u = \bar g & \text{on $\Gamma^-$}\\
         u = \bar u  & \text{at $t =0$},    \\
\end{array}
\right.
\end{equation}
where $\bar u$ and $\bar g$ are bounded smooth functions  
and $\Gamma^-$ is the portion of $]0, T[ \times \partial \Omega$
where the characteristics are entering the domain $]0, T[ \times \Omega$. Note, however, that if $b$ and $u$ are not sufficiently regular (if, for example, $u$ is only an $L^{\infty}$ function), then their values on negligible sets are not, a priori, well defined. In~\S~\ref{ss:df} we provide the distributional formulation of~\eqref{e:clpb} by relying on the theory of normal traces for weakly differentiable vector fields, see the works by Anzellotti~\cite{Anzellotti} and, more recently, by Chen and Frid~\cite{ChenFrid}, Chen, Torres and Ziemer~\cite{ChenTorresZiemer} and by Ambrosio, Crippa and Maniglia~\cite{AmbrosioCrippaManiglia}. 

Our main positive result reads as follows:
\begin{theorem}
\label{t:wp}
Let $\Omega \subseteq \R^d$ be an open set with uniformly Lipschitz  boundary.  Assume that the vector field $b$ satisfies the following hypotheses:
\begin{itemize}
\item[1.] $b \in L^{\infty} (]0, T [ \times \Omega ; \R^d)$;
\item[2.] $\div b \in L^{\infty} (]0, T [ \times \Omega)$;
\item[3.] for every open and bounded set $\Omega_\ast \subseteq  \Omega$, $b \in L^1_\loc([0, T [  ; BV  (\Omega_\ast; \R^d))$. 
\end{itemize}
Then, given $\bar u \in L^{\infty} ( \Omega )$ and $\bar g \in L^{\infty} ( \Gamma^-) $, problem~\eqref{e:clpb} admits a unique distributional solution $u \in L^{\infty} (]0, T [ \times \Omega)$. 
\end{theorem}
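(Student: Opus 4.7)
The plan is to prove uniqueness first---which is the analytically delicate part---and to obtain existence by a smoothing-and-compactness procedure. For uniqueness, by linearity it suffices to show that any distributional solution $u$ of~\eqref{e:clpb} with $\bar u \equiv 0$ and $\bar g \equiv 0$ vanishes identically. The key ingredient is Ambrosio's renormalization theorem~\cite{Ambrosio:trabv}, which, applied on each open and bounded $\Omega_\ast \subseteq \Omega$, guarantees that $u$ is a renormalized solution in $]0, T[ \times \Omega$. In particular, $u^2$ satisfies
\begin{equation*}
    \partial_t u^2 + \div (b u^2) + (\div b)\, u^2 = 0
\end{equation*}
in the sense of distributions inside the domain. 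This promotes $(u^2, b u^2) \in L^\infty$ to a bounded divergence-measure field on $]0, T[ \times \Omega$, to which the Anzellotti/Chen--Frid/Chen--Torres--Ziemer trace theory applies.

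The technical core is the identification of the normal trace of $(u^2, b u^2)$ on the parabolic boundary in terms of the prescribed data. Using the relation between Anzellotti normal traces of $L^\infty$ divergence-measure fields and weak boundary traces of their densities, as developed in~\cite{ChenTorresZiemer, AmbrosioCrippaManiglia}, the trace on $\{t = 0\}$ equals $\bar u^2 = 0$, while the trace on $]0, T[ \times \partial\Omega$ splits according to the sign of $b \cdot \nu_{\partial\Omega}$. On $\Gamma^-$ the vanishing of $\bar g$ kills the corresponding contribution, while on the outflow part of the boundary the contribution carries the sign that is favorable for an energy estimate. Discarding this non-negative boundary term and invoking Gr\"onwall's inequality with $\|\div b\|_{L^\infty}$ forces $u \equiv 0$. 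Existence is then obtained by extending $b$ and the data to a neighborhood of $\overline{\Omega}$ preserving the $L^\infty$ and $BV$ bounds, mollifying to obtain smooth approximations, solving the corresponding classical IBVPs, and passing to weak-$\ast$ limits; strong $L^1_\loc$ convergence of $b_n$ ensures convergence of the product $b_n u_n$, and continuity of normal traces along convergent sequences of uniformly bounded divergence-measure fields handles the boundary conditions.

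The principal obstacle is the trace identification just sketched: relating the Anzellotti normal trace of the quadratic quantity $(u^2, b u^2)$ to the prescribed datum of the linear quantity $(u, b u)$. This identification is what allows one to pass from the renormalized equation to a usable energy inequality, and it relies essentially on the $BV$ regularity of $b$ up to $\partial\Omega$ on bounded subsets, which is exactly what hypothesis~3 encodes. When this $BV$ bound deteriorates as one approaches the boundary, the step fails, and this is precisely the mechanism behind the counterexamples to uniqueness that the authors announce in the introduction.
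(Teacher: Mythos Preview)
Your proposal is correct and follows essentially the same route as the paper: reduce uniqueness to zero data, apply Ambrosio's renormalization theorem inside the domain to obtain the equation for $u^2$, invoke the trace renormalization identity of~\cite[Theorem~4.2]{AmbrosioCrippaManiglia} (which gives $\tr(u^2 b) = (\tr(ub))^2/\tr b$ where $\tr b \neq 0$ and $0$ elsewhere, and is exactly where the $BV$-up-to-the-boundary hypothesis enters) to see that the boundary term in the identity for $u^2$ has the right sign, and conclude with Gronwall. The paper carries out the Gronwall step with explicit ``cone'' test functions $\nu(t,x)=h(\|b\|_{L^\infty}|t-\bar t|+|x|)$ to localize in the possibly unbounded domain, and defers existence to a companion note following Boyer's argument rather than the extension--mollification scheme you sketch, but the architecture is the same.
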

Some remarks are here in order. First, we recall that $\Gamma^-$ is a subset of $]0, T[ \times \partial \Omega$ and we point out that $L^{\infty} (\Gamma^-)$ denotes the space 
$L^{\infty} (\Gamma^-,~\Leb^1~\otimes~\Haus^{d-1})$. 

Second, we refer to the book by Leoni~\cite[Definition 12.10]{Leoni} for the definition of open set with uniformly Lipschitz boundary. In the case when $\Omega$ is bounded, the definition reduces to the classical condition that $\Omega$ has Lipschitz boundary. This regularity assumption guarantees that classical results on the traces of Sobolev and $BV$ functions apply to the set $\Omega$, see again Leoni~\cite{Leoni} for an extended discussion. 

Third, several works are devoted to the analysis of the initial-boundary value problem~\eqref{e:clpb}. In particular, we refer to Bardos~\cite{Bardos} for an extended discussion on the case when $b$ enjoys Lipschitz regularity, and to Mischler~\cite{Mischler} for the case when the continuity equation in~\eqref{e:clpb} is the Vlasov equation. Also, we quote reference~\cite{Boyer}, where Boyer establishes uniqueness and existence results for~\eqref{e:clpb} and investigates space continuity properties of the trace of the solution on suitable surfaces. The main assumption in~\cite{Boyer} is that $b$ has Sobolev regularity, and besides this there are the technical assumptions that $\div b \equiv 0$ and that $\Omega$ is bounded. See also the analysis by Girault and Ridgway Scott~\cite{GiraultRidgway} for the case when $b$ enjoys Sobolev regularity and is tangent to the boundary. Note that the extension of Boyer's proof to the case when $b$ has $BV$ regularity is not straightforward. 

Our approach is quite different from Boyer's: indeed, the analysis in~\cite{Boyer} is based on careful estimates on the behavior of $b$ and $u$ close to the boundary and involves the introduction of a system of normal and tangent coordinates at $\partial \Omega$, and the use of a local regularization of the equation. Conversely, as mentioned above, in the present work we rely on the theory of normal traces for weakly differentiable vector fields.  From the point of view of the results we obtain, the main novelties of the present work can be summarized as follows. 
\begin{itemize}
\item We provide a distributional formulation of problem~\eqref{e:clpb} under the solely assumptions that $b \in L^{\infty} (]0, T[ \times \Omega; \R^d)$ and $\div b$ is a locally finite Radon measure, see Lemma~\ref{l:wt} and Definition~\ref{d:rf} in \S~\ref{ss:df}. Conversely, in~\cite{Boyer} the distributional formulation of~\eqref{e:clpb} requires that $b$ enjoys Sobolev regularity. 
\item We establish well-posedness of~\eqref{e:clpb} (see Theorem~\ref{t:wp}) under the assumptions that $b$ enjoys $BV$ regularity, while in~\cite{Boyer} Sobolev regularity is required. Note, however, that the main novelty in Theorem~\ref{t:wp} is the uniqueness part, since existence can be established under the solely hypotheses that $b \in L^{\infty} (]0, T[ \times \Omega; \R^d)$ and $\div b \in L^{\infty}(]0, T[ \times \Omega)$ by closely following the same argument as in~\cite{Boyer}, see~\cite{CDS:hyp} for the technical details. We point out in passing that, for the Cauchy problem, the extension of the uniqueness result from Sobolev to $BV$ regularity is one of the main achievement in Ambrosio's paper~\cite{Ambrosio:trabv}. Also, this extension is crucial in view of the applications to some classes of nonlinear PDEs like systems of conservation laws in several space dimensions, see the lecture notes by De Lellis~\cite{CDL:notes} and the references therein.   
\item We exhibit some counterexamples (see Theorems~\ref{t:cex} and~\ref{t:cex2} and Corollary~\ref{c:dpt} below) showing that, regardless the orientation of $b$ at the boundary, uniqueness may be violated as soon as $b$ enjoys $BV$ regularity in every open set $\Omega_\ast$ compactly contained in $\Omega$, but the regularity deteriorates at the boundary of $\Omega$. Also, as the proof of Theorem~\ref{t:cex2} shows,  if $BV$ regularity deteriorates at the domain boundary, it may happen that the normal trace of $b$ at $\partial \Omega$ is identically zero, while the normal trace of $bu$ is identically $1$, see \S~\ref{ss:df} for the definition of normal trace of $b$ and $bu$.  
\item In~\cite[\S~7.1]{Boyer}, Boyer establishes a space continuity property for the solution of~\eqref{e:clpb} in directions trasversal to the vector field $b$ under the assumption that $b$ enjoys Sobolev regularity. Proposition~\ref{p:sc} in the present work ensures that an analogous property holds under $BV$ regularity assumptions. The property we establish is loosely speaking the following: assume $\Sigma_r$ is a family of surfaces which continuously depend on the parameter $r$ and assume moreover that the surfaces are all transversal to a given direction.  Then the normal trace of the vector field $u b$ on $\Sigma_r$ strongly converges to the normal trace of $ub$ on $\Sigma_{r_0}$ as $r \to r_0$.
\end{itemize}
Here is our first counterexample. In the statement of Theorem~\ref{t:cex}, $\tr b$ denotes the normal trace of $b$ along the outward pointing, unit normal vector to $\partial \Omega$, as defined in~\S~\ref{ss:df}. 
\begin{theorem}
\label{t:cex}
      Let $\Omega$ be the set $\Omega: = ]0, + \infty[ \times \R^2$. Then there is a vector field ${b: ]0, 1[ \times \Omega \to \mathbb R^3}$ such that
      \begin{itemize}
       \item[i)] $ b \in L^{\infty} (]0,1[ \times \Omega; \mathbb R^3)$;
       \item[ii)] $\div b \equiv 0$;
       \item[iii)] for every open and bounded set $\Omega_\ast$ such that its closure 
       $\bar \Omega_\ast \subseteq \Omega$, we have $b \in  L^1([0, 1 [  ; BV  (\Omega_\ast; \R^3))$;
        \item[iv)] $\tr b \equiv - 1$ on $]0, 1[ \times \partial \Omega$;
       \item[v)]  the initial-boundary value problem
      \begin{equation}
       \label{e:cex:ibvp}
       \left\{
        \begin{array}{lll}
              \partial_t u + \div (bu ) = 0 & \text{in  $]0, 1[ \times \Omega$} \\
              u = 0 & \text{on $ ]0, 1[ \times \partial \Omega$} \\
               u = 0 & \text{at $ t=0 $}   
               \phantom{\int} \\
        \end{array}
       \right.  
      \end{equation}
      admits infinitely many different solutions. 
\end{itemize}
\end{theorem}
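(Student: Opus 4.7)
The strategy is to reduce the construction to the celebrated Cauchy-problem counterexample of Depauw~\cite{Depauw}, where the role of \emph{time} is played here by the normal coordinate $x_1$. Depauw exhibits a bounded divergence-free planar vector field $V=V(\tau,y)\colon\,]0,1[\times\R^2\to\R^2$, enjoying local $BV$ regularity on every strip $[\eps,1[\times\R^2$ but whose seminorm blows up as $\tau\to 0^+$, together with a non-trivial bounded distributional solution $U\in L^\infty(]0,1[\times\R^2)$ of $\partial_\tau U+\divy(VU)=0$ satisfying $U(\tau,\cdot)\to 0$ as $\tau\to 0^+$. After extending $V$ and $U$ by zero for $\tau\ge 1$ we may, and will, view them as defined on $]0,+\infty[\times\R^2$.

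Given this input, I would set
\[
b(t,x_1,x_2,x_3)\;:=\;\bigl(1,\,V(x_1,x_2,x_3)\bigr),
\qquad
u(t,x_1,x_2,x_3)\;:=\;U(x_1,x_2,x_3)\,\ind_{\{t>x_1\}},
\]
and verify the five conclusions in turn. Properties (i)--(iii) are inherited from $V$: the $L^\infty$ bound is immediate; $\div b=\divy V=0$ because $b_1\equiv 1$ and the remaining two components do not depend on $x_1$ in the differentiated direction; the local $BV$ condition holds because any $\Omega_\ast$ with $\bar\Omega_\ast\subseteq\Omega$ is contained in a slab $[\eps,R]\times B_R$ on which Depauw's field is $BV$, and $b$ is moreover $t$-independent. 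For (iv), the continuity of $b_1\equiv 1$ up to $\{x_1=0\}$ yields, via the Anzellotti--Chen--Frid normal trace against the outward unit normal $(-1,0,0)$, the equality $\tr b\equiv -1$.

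The heart of (v) is a direct distributional computation. Writing $\chi:=\ind_{\{t>x_1\}}$, one has $\partial_t\chi=\delta_{\{t=x_1\}}=-\partial_{x_1}\chi$, whence
\[
\partial_t u+\div(bu)\;=\;U\,(\partial_t\chi+\partial_{x_1}\chi)\;+\;\chi\,\bigl(\partial_{x_1}U+\divy(VU)\bigr)\;=\;0,
\]
the first bracket vanishing by cancellation, the second because $U$ solves Depauw's equation with $x_1$ playing the role of time. The initial condition is trivially met ($\chi\equiv 0$ at $t=0$), while the boundary condition follows, via the normal-trace formalism of \S\ref{ss:df} applied to $bu$, from Depauw's weak vanishing of $U$ at $\tau=0$. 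Non-uniqueness is then immediate by linearity: the family $\{\lambda u:\lambda\in\R\}$ consists of pairwise distinct distributional solutions of the IBVP with vanishing initial and boundary data.

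The step I expect to require the most care is the rigorous verification of the boundary condition in the distributional framework of \S\ref{ss:df}: since $u$ is only an $L^\infty$ function with no classical trace, one has to read the boundary value off from the normal trace of the space-time divergence-free field $(u,bu)$ on $]0,1[\times\partial\Omega$, and convert Depauw's weak vanishing of $U$ at $\tau=0$ into the vanishing of this normal trace in the sense of Definition~\ref{d:rf}. A minor but necessary check is that the cut-off $V\equiv 0$ for $x_1\ge 1$ does not create spurious singular divergence, since the resulting jumps of $b$ across $\{x_1=1\}$ are purely tangential to that hyperplane.
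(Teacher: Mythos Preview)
Your proposal is correct and follows essentially the same strategy as the paper: both reinterpret Depauw's Cauchy counterexample with the normal coordinate $x_1$ playing the role of time, and both take $u$ to be Depauw's nontrivial solution multiplied by $\ind_{\{t>x_1\}}$, verifying the PDE by the cancellation of normal traces across $\{t=x_1\}$ and the boundary condition via the weak vanishing of $U$ at $\tau=0$. The only difference is cosmetic: the paper sets $b=(1,0)$ on the region $\{x_1>t\}$ (so that $b$ is time-dependent), whereas you keep $b=(1,V)$ everywhere; since $u\equiv 0$ on that region either choice works, and your time-independent field is arguably the cleaner option.
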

Some remarks are here in order. First, since the vector field $b$ is divergence-free, then any solution of~\eqref{e:cex:ibvp} is a solution of the transport equation $$\partial_t u + b \cdot \nabla u=0$$ satisfying 
zero boundary and initial conditions. Second, the proof of Theorem~\ref{t:cex} uses an intriguing construction due to Depauw~\cite{Depauw}.  

Finally, note that property iv) in the statement of Theorem~\ref{t:cex} states that the vector field $b$  is inward pointing at the boundary $\partial \Omega$. This fact is actually crucial for our argument because it allows us to build on Depauw's construction.

When the vector field is outward pointing, one could heuristically expect that the solution would not be affected by the loss of regularity of $b$ at the domain boundary. Indeed, in  the smooth case the solution is simply ``carried out" of the domain along the characteristics and, consequently, the behavior of the solution inside the domain is not substantially affected by what happens close to the boundary. Hence, one would be tempted to guess that, even in the non smooth case, when $\tr b >0$ on the boundary the solution inside the domain is not affected by boundary behaviors and uniqueness should hold even when the $BV$ regularity of $b$ deteriorates at the boundary. The example discussed in the statement of Theorem~\ref{t:cex2} shows that this is actually not the case and that, even if $b$ is outward pointing at $\partial \Omega$, then uniqueness may be violated as soon as the $BV$ regularity deteriorates at the boundary.
\begin{theorem}
\label{t:cex2}
      Let $\Omega$ be the set $\Omega: = ]0, + \infty[ \times \R^2$. Then there is a vector field ${b: ]0, 1[ \times \Omega \to \mathbb R^3}$ such that
      \begin{itemize}
       \item[i)] $ b \in L^{\infty} (]0,1[ \times \Omega; \mathbb R^3)$;
       \item[ii)] $\div b \equiv 0$;
       \item[iii)] for every open and bounded set $\Omega_\ast$ such that its closure 
       $\bar \Omega_\ast \subseteq \Omega$, we have $b \in  L^1([0, 1 [  ; BV  (\Omega_\ast; \R^3))$;
       \item[iv)] $\tr b \equiv 1$ on $]0, 1[ \times \partial \Omega$;       
       \item[v)] the initial-boundary value problem
      \begin{equation}
       \label{e:cex:ibvp2}
       \left\{
        \begin{array}{lll}
              \partial_t u + \div (bu ) = 0 & \text{in  $]0, 1[ \times \Omega$} \\
                             u = 0 & \text{at $ t=0$}   \phantom{\int} \\
        \end{array}
       \right.  
      \end{equation}
      admits infinitely many different solutions. 
\end{itemize}
\end{theorem}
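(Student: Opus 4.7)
The plan is to adapt the construction underlying Theorem~\ref{t:cex} so that the Depauw-like solution vanishes at both the initial \emph{and} the final time, and then to time-reverse the resulting pair so that the inward-pointing flow becomes outward-pointing while the zero final value becomes a zero initial value.

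First, I would produce a variant of Depauw's example on $]0,1[ \times \R^2$ which yields a bounded, divergence-free vector field $\tilde b(s,y)$ together with a non-trivial bounded transport solution $\tilde u(s,y)$ satisfying $\tilde u(0,\cdot) \equiv \tilde u(1,\cdot) \equiv 0$ (and $\tilde u \not\equiv 0$ in between). This is obtained by concatenating Depauw's original evolution on $[0,1/2]$, which builds up a non-trivial $\tilde u$ from zero initial data, with its time reversal on $[1/2,1]$, which unwinds $\tilde u$ back to zero; reversibility of the divergence-free transport equation makes the gluing consistent, and the modified $\tilde b$ has deteriorating $BV$ regularity at both endpoints $s \in \{0,1\}$ but remains $BV$ on every set compactly contained in $]0,1[ \times \R^2$.

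Secondly, I would rerun the construction of Theorem~\ref{t:cex} using this modified pair $(\tilde b,\tilde u)$ in place of Depauw's original one. Because that construction transports the vanishing of $\tilde u$ at both endpoints of its ``Depauw time'' into the vanishing of $u$ at $t=0$ and $t=1$, one obtains a divergence-free vector field $b^\sharp$ on $]0,1[ \times \Omega$ satisfying properties (i)--(iii) with $\tr b^\sharp \equiv -1$, together with a non-trivial bounded solution $u^\sharp$ of the inward initial-boundary value problem~\eqref{e:cex:ibvp} for which, crucially, not only $u^\sharp(0,\cdot)\equiv 0$ but also $u^\sharp(1,\cdot)\equiv 0$.

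Finally, I would set $b(t,x):= -b^\sharp(1-t,x)$ and $u(t,x):= u^\sharp(1-t,x)$. Since $\div b^\sharp \equiv 0$, the pair $(b,u)$ solves $\partial_t u + \div (b u)=0$ in the sense of distributions on $]0,1[ \times \Omega$; the sign flip turns $\tr b^\sharp \equiv -1$ into $\tr b \equiv +1$; and the extra double-vanishing of $u^\sharp$ gives the initial condition $u(0,\cdot) = u^\sharp(1,\cdot) \equiv 0$. Properties (i)--(iv) of the theorem are manifestly preserved, and the non-triviality of $u^\sharp$, together with rescaling $u \mapsto \lambda u$, furnishes infinitely many distinct distributional solutions of~\eqref{e:cex:ibvp2}. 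The step I expect to be hardest is the first one: arranging the forward/backward gluing so that, once pushed through the change of variables of Theorem~\ref{t:cex}, the $BV$ blow-up of $\tilde b$ at $s=1$ produces an interior-harmless deterioration of $b$ (concentrated at $\partial \Omega$) in exactly the same way as the original Depauw blow-up at $s=0$, so that the $L^1_\loc([0,1[;BV(\Omega_\ast))$ requirement continues to hold for every $\Omega_\ast$ with $\bar\Omega_\ast \subseteq \Omega$.
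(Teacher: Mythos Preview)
Your time-reversal idea is natural, but the plan rests on a misreading of how Depauw's variable enters the construction of Theorem~\ref{t:cex}. In that construction the ``Depauw time'' is the \emph{spatial} coordinate $r$, not the physical time $t$: one sets $b^\sharp(t,r,y)=(1,c(r,y))$ on $\Lambda^-=\{r<t\}$ and $u^\sharp(t,r,y)=v(r,y)\,\mathbf 1_{\{r<t\}}$. Hence, replacing $(c,v)$ by your modified pair $(\tilde c,\tilde v)$ with $\tilde v(0,\cdot)=\tilde v(1,\cdot)=0$ gives
\[
u^\sharp(t,r,y)=\tilde v(r,y)\,\mathbf 1_{\{r<t\}},
\qquad\text{so}\qquad
u^\sharp(t,\cdot)\ \weaks\ \tilde v(\cdot)\,\mathbf 1_{\{r<1\}}\quad\text{as }t\to 1^-,
\]
which is a nontrivial function of $(r,y)$ on $]0,1[\times\R^2$. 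The vanishing of $\tilde v$ at the Depauw endpoint $s=1$ only says $\tilde v(1,\cdot)=0$, i.e.\ it controls a single slice $\{r=1\}$; it does \emph{not} force $u^\sharp(1,\cdot)\equiv 0$. After your time reversal $u(t,\cdot)=u^\sharp(1-t,\cdot)$ you therefore get $u(0,\cdot)=\tilde v\,\mathbf 1_{\{r<1\}}\not\equiv 0$, and the initial condition in~\eqref{e:cex:ibvp2} fails. The same misidentification undermines the ``hardest step'' you flag: the $BV$ blow-up of $\tilde c$ at $s=1$ sits at the \emph{interior} hypersurface $\{r=1\}\subset\Omega$, not at $\partial\Omega=\{r=0\}$, so there is no mechanism by which the Theorem~\ref{t:cex} embedding would concentrate it at the boundary.

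For contrast, the paper's proof does not use Depauw's example or time reversal at all. It builds $b$ directly so that its $r$-component oscillates between the values $1$, $-5$ and $0$ on a ``three-colour chessboard'' whose scale shrinks as $r\to 0^+$; the weak$^\ast$ limit of the normal component is $+1$, giving $\tr b\equiv 1$, yet the field contains inward-pointing channels (the squares where the $r$-component equals $+1$). The nontrivial solution is simply the indicator of those inward channels within $\Lambda^-$, and it is transported along the piecewise-constant characteristics. What makes this work---and what your reversal strategy cannot supply---is that the sign of the normal trace is an \emph{averaged} quantity, so one can have $\tr b>0$ while $b$ genuinely points inward on a set of positive measure arbitrarily close to $\partial\Omega$.
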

We make some observations. First, by a trivial modification of the proof one can exhibit a vector field $b$ satisfying properties i), ii), iii) and v) above and, instead of property iv), $\tr b \equiv 0$ on $]0, 1[ \times \partial \Omega$. Hence, even in the case when $b$ is tangent at the domain boundary, uniqueness may be violated as soon as the $BV$ regularity deteriorates at the domain boundary.

Second, the proof of Theorem~\ref{t:cex2} does not use Depauw's example~\cite{Depauw}. The key point is constructing a non trivial solution of~\eqref{e:cex:ibvp2} such that $u(t, x) \ge 0$ for a.e. $(t, x) \in ]0, 1[ \times \Omega$ and $\tr \, (bu) < 0$ (note that $\tr b > 0$ by property iv) in the statement of the theorem). Heuristically, such solution ``enters'' the domain $\Omega$, although the characteristics are outward pointing at the boundary.

Third, since $\tr b \equiv 1$ on $]0, 1 [ \times \partial \Omega$, then in the formulation of the initial-boundary value problem~\eqref{e:cex:ibvp2} we do not prescribe the value of the solution $u$ at the boundary. In the proof of Theorem~\ref{t:cex2}  we exhibit infinitely many different solutions of~\eqref{e:cex:ibvp2} and in general different solutions attain different values on $]0, 1[ \times \partial \Omega$. However, by refining the proof of Theorem~\ref{t:cex2} we obtain the following result.
\begin{corol}
\label{c:dpt}
        Let $\Omega$ be the set $\Omega : = ]0, + \infty [ \times \R^2$, then there is a vector field $b~:~]0, 1[ \times \Omega \to \R^3$ satisfying requirements $\mathrm{i)}, \dots, \mathrm{v)}$ in the statement of Theorem~\ref{t:cex2} and such that the initial-boundary value problem~\eqref{e:cex:ibvp2} admits infinitely many solutions that satisfy $\tr (bu) \equiv 0$ on $]0, 1[ \times \partial \Omega$.  
\end{corol}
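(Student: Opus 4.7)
The plan is to exploit the linearity of~\eqref{e:cex:ibvp2} together with the construction underlying Theorem~\ref{t:cex2}. If $u_1$ and $u_2$ are two distributional solutions of~\eqref{e:cex:ibvp2} for the same $b$, then $w := u_1 - u_2$ solves the same IBVP with zero initial datum, and by linearity of the distributional normal trace (well-defined because $\div(bw) = -\partial_t w \in L^\infty$) one has $\tr(bw) = \tr(bu_1) - \tr(bu_2)$. Consequently, if two \emph{distinct} solutions with the \emph{same} boundary trace can be produced, the difference $w$ is a nontrivial solution of~\eqref{e:cex:ibvp2} with $\tr(bw) \equiv 0$, and the family $\{\lambda w : \lambda \in \R\}$ supplies infinitely many solutions with vanishing normal trace, yielding the corollary.

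The second step is to revisit the construction of Theorem~\ref{t:cex2} and produce such a pair. That construction already yields infinitely many distinct solutions of~\eqref{e:cex:ibvp2}, typically by superposing ``building blocks'' whose amplitudes, scales or positions in the tangential variables $(x_2,x_3)$ are free parameters; this gives a linear map from a (possibly infinite-dimensional) parameter space into $L^\infty(]0,1[ \times \partial \Omega)$ sending each choice of parameters to the corresponding $\tr(bu)$. To exhibit the desired pair $(u_1,u_2)$, it suffices to show that this linear map has non-trivial kernel. A natural strategy is to combine two building blocks symmetric with respect to a translation or reflection in $(x_2,x_3)$, so that their contributions to the normal flux at $\{x_1 = 0\}$ cancel exactly even though the solutions themselves differ inside $\Omega$. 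Equivalently, one averages a single building block over a symmetry group acting on $\partial\Omega$ that preserves the shape of $b$ but reverses the sign of the trace of $bu$.

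The main obstacle is the quantitative control of $\tr(bu)$ in terms of the construction parameters: the proof of Theorem~\ref{t:cex2} presumably only needs a lower bound on $|\tr(bu)|$ to rule out uniqueness, whereas here exact cancellation is required. I expect this to be achievable by direct computation once the building blocks are written down — the trace is determined by the boundary behavior of the oscillating components, and symmetric superposition gives pointwise cancellation rather than mere integral cancellation — and this is precisely the ``refinement of the proof of Theorem~\ref{t:cex2}'' that the statement of the corollary alludes to.
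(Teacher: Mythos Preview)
Your reduction by linearity is sound in principle, but the construction in Theorem~\ref{t:cex2} does not supply the ingredients you assume. In that proof the vector field $b$ is fixed once and for all, and a \emph{single} nontrivial solution $u$ is built (equation~\eqref{e:vu}); the ``infinitely many'' solutions are just the scalar multiples $\lambda u$, whose traces are $\lambda\,\tr(bu)$ with $\tr(bu)$ a \emph{nonzero constant} on $]0,1[\times\partial\Omega$. There is no multi-parameter family of building blocks with free amplitudes or positions. Because $b$ is periodic in the tangential variables, translating $u$ by a period returns the same function; translating by anything else, or reflecting, destroys the property of being a solution for this specific $b$. And since the trace is a constant function, no ``pointwise cancellation'' between symmetric copies is possible: any symmetry preserving $b$ also preserves that constant. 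The map from your parameter space to boundary traces is one-dimensional with trivial kernel, and the argument stalls.

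The paper's proof is not a superposition argument at all: it \emph{modifies the vector field} $b$. Inside each region where the $r$-component of $b$ equals $1$ (the regions carrying the nontrivial solution), a localized version of Depauw's rotating field is inserted in the $(y_1,y_2)$-components. This extra mechanism forces the solution to mix, oscillating between $+1$, $-1$ and $0$ on finer and finer chessboards as $r\to 0^+$, so that $bu$ converges weakly$^\ast$ to $0$ at the boundary and $\tr(bu)\equiv 0$ holds directly for a single nontrivial $u$. The ``refinement'' the statement alludes to is this added Depauw-type mixing built into $b$, not a symmetric combination of solutions for the original field.
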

The additional condition 
$\tr (bu) \equiv 0$ in the corollary can be heuristically interpreted as (a weak version of)
$u \equiv 0$ on $]0, 1[ \times \partial \Omega$. 

We also point out that, again by a trivial modification of the proof, one can exhibit a vector field $b$ satisfying properties i), ii), iii) and v) in the statement of Corollary~\ref{c:dpt} and, instead of property iv), $\tr b \equiv 0$ on $]0, 1[ \times \partial \Omega$. Also, for any given real constant $c$, one can actually construct infinitely many solutions of~\eqref{e:cex:ibvp2} that satisfy $\tr (bu) = c$ on $]0, 1[ \times \partial \Omega$.

\subsection*{Outline}
The paper is organized as follows. In \S~\ref{s:pre} we recall some results on normal traces of vector fields established in~\cite{AmbrosioCrippaManiglia}. In \S~\ref{s:pwp} we establish the uniqueness part of the proof of Theorem~\ref{t:wp} and the space continuity property. In \S~\ref{s:cex} we construct the counter-examples that prove Theorems~\ref{t:cex} and \ref{t:cex2} and Corollary~\ref{c:dpt}. 
\subsection*{Notation}
\begin{itemize}
\item $ \Leb^n$: the $n$-dimensional Lebesgue measure.
\item $\Haus^m$: the $m$-dimensional Hausdorff measure. 
\item $\mu \res E$: the restriction of the measure $\mu$ to the measurable set $E$. 
\item $\mathbf{1}_E$: the characteristic function of the set $E$. 
\item $\Omega$: an open set in $\R^d$ having uniformly Lipschitz continuous boundary.
\item $L^{\infty} (]0, T[   \times \partial \Omega): = L^{\infty} (]0, T[  \times \partial \Omega, \Leb^1 \otimes 
 \Haus^{d-1} )$, where we denote with $\otimes$ the (tensor) product of two measures.
\item $\div  b$: the distributional divergence of the vector field ${b: ]0, T[ \times \Omega \to \R^d}$, computed with respect to the $x \in \Omega$ variable only.
\item $\divtx B$: the standard ``full'' distributional divergence of the vector field $B$. In particular, when $B: ]0, T[ \times \Omega \to \R^{d+1}$, then  $\divtx B$ is the divergence computed with respect to the $(t, x) \in ]0, T[ \times \Omega$ variable .
\item $\nabla \varphi:$ the gradient of the smooth function ${\varphi: ]0, T[ \times \Omega \to \R^d}$, computed with respect to the $x \in \Omega$ variable only.
\item $\tr (b, \Sigma)$: the normal trace of the  vector field $b$ on the surface $\Sigma \subseteq \Omega$, as defined in~\cite{AmbrosioCrippaManiglia} (see also \S~\ref{ss:acm} in here).
\item $\tr b$: the normal trace of the vector field $b$ on $]0, T[ \times \partial \Omega$, defined as in~\S~\ref{ss:df}. 
\item $\mathcal M_{\infty}(\Lambda)$: the class of bounded, measure-divergence vector fields, namely the functions $B \in L^{\infty} (\Lambda; \R^N)$ such that the distributional divergence $\divtx B$ is a locally bounded Radon measure on the open set $\Lambda \subseteq \R^N$.
\item $|x|$: the Euclidian norm of the vector $x \in \R^d$. 
\item $\mathrm{supp} \, \rho$: the support of the smooth function $\rho: \R^N \to \R$.
\item $B_R (0)$: 
the ball of radius $R>0$ and center at $0$. 
\end{itemize}

\section{Normal traces of bounded, measure-divergence vector fields}
\label{s:pre}
\label{ss:acm}
We collect in this section some definitions and properties concerning weak traces of measure-divergence vector fields. Our presentation follows \cite[\S 3]{AmbrosioCrippaManiglia}. 

Given an open set $\Lambda \subseteq \R^N$, we denote by $\mathcal M_\infty (\Lambda)$ the family of bounded, measure-divergence vector fields, namely the functions $B \in L^{\infty} (\Lambda; \R^N)$ such that the distributional divergence $\divtx B$ is a locally bounded Radon measure on $\Lambda$. 
\begin{definition}\label{nomal_trace}
   Assume that $\Lambda \subseteq \R^N$ is a domain with uniformly Lipschitz continuous boundary. 
   Let $B \in \mathcal M_\infty (\Lambda)$, then the normal trace of $B$ on $\partial \Lambda$ can be defined as a distribution by the identity
   \begin{equation}\label{e:normal_trace}
     \langle\trace (B, \partial \Lambda), \varphi\rangle = \int_{\Lambda}\nabla\varphi\cdot B\,dx +\int_{\Lambda}\varphi\,d(
     \divtx B) \qquad \forall \varphi \in 
     \mathcal C^{\infty}_c (\R^N).  
   \end{equation}
\end{definition}
This definition is consistent with the Gauss-Green formula if the vector field $B$ is sufficiently smooth. In this case the distribution is induced by the integration of $B\cdot \vec n$ on $\partial \Lambda$, where $\vec n$ is the outward pointing, unit normal vector to $\partial \Lambda$. 
 
\begin{lemma}\label{proposition3.2_ACM}
  The distribution defined above is induced by an $L^\infty$ function on $\partial \Lambda$, which we can still call  $\trace (B, \partial \Lambda)$, with
  \begin{equation}\label{Linfty_bound}
    \|\trace (B, \partial \Lambda)\|_{L^\infty(\partial\Lambda)}\leq \|B\|_{L^\infty(\Lambda)}.
  \end{equation}
Moreover, if $\Sigma$ is a Borel set contained in $\partial \Lambda_1 \cap \partial \Lambda_2$  and if $\vec n_{1}=\vec n_{2}$ on $\Sigma$, then
\begin{equation}
\label{trace_on_sigma}
  \trace (B, \partial \Lambda_1)= \trace (B, \partial \Lambda_2) \quad \Haus^{N-1}-a.e. \; \text{on} \; \Sigma. 
\end{equation} 
\end{lemma}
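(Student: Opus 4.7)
The plan is to prove the two assertions separately, first the representability and $L^\infty$ bound and then the locality property, using a localization-and-flattening argument together with a coarea-type computation on a thin tubular shell.

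For the first claim I would begin by observing that the bilinear form on the right-hand side of \eqref{e:normal_trace} depends on $\varphi$ only through its restriction to $\partial\Lambda$. Indeed, if $\varphi \in \C^{\infty}_c(\R^N)$ vanishes on an open neighborhood of $\partial\Lambda$, then $\varphi \in \C^{\infty}_c(\Lambda)$ and the defining identity of $\divtx B$ as a distribution on $\Lambda$ forces $\langle \tr(B,\partial\Lambda),\varphi\rangle = 0$. This reduces the problem to showing the existence of a constant, in fact $\|B\|_{L^\infty(\Lambda)}$, such that
\begin{equation*}
   |\langle\tr(B,\partial\Lambda),\varphi\rangle| \le \|B\|_{L^\infty(\Lambda)} \, \|\varphi\|_{L^1(\partial\Lambda,\,\Haus^{N-1})},
\end{equation*}
after which the Riesz representation theorem on $L^1(\partial\Lambda)$ yields an $L^\infty$ representative satisfying \eqref{Linfty_bound}.

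To obtain that bound, I would use the uniform Lipschitz regularity of $\partial\Lambda$ to pass, via a partition of unity, to a local chart in which $\Lambda$ is the epigraph $\{y_N > f(y')\}$ of a Lipschitz function and $\partial\Lambda$ is its graph. For $\varphi \in \C^{\infty}_c$ supported in such a chart, I would multiply $\varphi$ by a cutoff $\eta_\eps(y) = \chi\bigl((y_N - f(y'))/\eps\bigr)$, where $\chi$ is smooth, $\chi(0)=1$, $\chi \equiv 0$ on $[1,\infty)$, and $\int |\chi'| \le 1+\delta$. Since $\varphi\eta_\eps$ and $\varphi$ have the same restriction to $\partial\Lambda$, the quantity $\langle \tr(B,\partial\Lambda),\varphi\rangle$ equals
\begin{equation*}
   \int_\Lambda \nabla(\varphi\eta_\eps)\cdot B\,dy + \int_\Lambda \varphi\eta_\eps\,d(\divtx B).
\end{equation*}
Both $\eta_\eps$ and $\varphi\,\nabla\eta_\eps$ are supported in a strip of width $\eps$, so the term in $d(\divtx B)$ and the $\eta_\eps \nabla\varphi$ part of the first integral are $O(\eps)$ (the first using local finiteness of $\divtx B$, the second using $\eps\|\nabla\varphi\|_\infty \mathcal L^N(\text{strip}) = O(\eps)$). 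The remaining dominant contribution $\int B\cdot\varphi\nabla\eta_\eps\,dy$ is bounded by $\|B\|_{L^\infty} \int |\varphi|\,|\nabla\eta_\eps|\,dy$, and a direct change of variables (or the coarea formula for the Lipschitz function $y \mapsto y_N - f(y')$) shows
\begin{equation*}
   \int_\Lambda |\varphi|\,|\nabla\eta_\eps|\,dy \;\xrightarrow[\eps\to 0]{}\; \int_{\partial\Lambda}|\varphi|\,d\Haus^{N-1},
\end{equation*}
with a constant tending to $1$ as $\delta\to 0$. Sending $\eps\to 0$ and then $\delta\to 0$ yields the sharp bound.

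For the locality property \eqref{trace_on_sigma}, I would argue that at $\Haus^{N-1}$-a.e.\ point $p$ of $\Sigma$ both Lipschitz boundaries $\partial\Lambda_1$ and $\partial\Lambda_2$ admit the same tangent hyperplane (orthogonal to the common outer normal $\vec n_1(p)=\vec n_2(p)$) and $p$ is a Lebesgue point of $\tr(B,\partial\Lambda_i)$ for $i=1,2$. Working in the chart that flattens this tangent plane and repeating the approximation above with the same cutoff $\eta_\eps$ for both domains, the integrals appearing in the two definitions differ only on a set of volume $o(\eps^N)$ (where $\Lambda_1$ and $\Lambda_2$ deviate from the common half-space tangent at $p$), which contributes negligibly to each of the three terms controlled in the previous paragraph. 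Letting $\eps\to 0$ and then shrinking the support of the test function to $p$ gives $\tr(B,\partial\Lambda_1)(p) = \tr(B,\partial\Lambda_2)(p)$ at every such $p$, hence $\Haus^{N-1}$-a.e.\ on $\Sigma$.

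I expect the main obstacle to be the sharp estimate \eqref{Linfty_bound} with constant exactly $\|B\|_{L^\infty(\Lambda)}$: the chart change introduces a Jacobian and the Lipschitz graph function $f$ makes the ``surface element'' factor nontrivial, so obtaining the optimal constant requires careful handling of the coarea identity for $y_N - f(y')$ and the limit of $\int |\varphi|\,|\nabla\eta_\eps|$ rather than a more routine continuous bound.
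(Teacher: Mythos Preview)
The paper does not supply a proof of this lemma: \S\ref{s:pre} merely collects results from \cite{AmbrosioCrippaManiglia}, and this statement is their Proposition~3.2, quoted without argument. There is therefore no proof in the paper to compare against, and I can only assess your proposal on its own merits.

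Your treatment of the $L^\infty$ bound \eqref{Linfty_bound} is the standard cutoff--coarea argument and is correct, modulo the minor point that $\eta_\eps$ is only Lipschitz (since $f$ is), so $\varphi\eta_\eps\notin C^\infty_c$ and a density or smoothing step is needed before you may invoke \eqref{e:normal_trace} with $\varphi\eta_\eps$ in place of $\varphi$.

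The locality argument, however, has a genuine gap. Using ``the same cutoff $\eta_\eps$ for both domains'' is problematic: if $\eta_\eps$ is built from the common tangent plane at $p$ rather than from $f_i$, then $\eta_\eps\not\equiv 1$ on $\partial\Lambda_i$, so $\varphi\eta_\eps$ and $\varphi$ no longer agree on $\partial\Lambda_i$ and the substitution that drove your first part fails; if instead each $\Lambda_i$ gets its own $\eta_\eps^{(i)}$, you must compare two genuinely different expressions. Moreover, the ``volume $o(\eps^N)$'' claim conflates the strip thickness $\eps$ with the blow-up radius at $p$: for a \emph{fixed} test function $\varphi$ and $\eps\to 0$, the symmetric difference of the two $\eps$-strips has measure controlled by $\int|f_1-f_2|$ over the base of $\mathrm{supp}\,\varphi$, which does not vanish with $\eps$. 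A correct version must couple $\eps$ to the support radius $r$ of $\varphi$ and use the estimate $\Leb^N\big((\Lambda_1\triangle\Lambda_2)\cap B_r(p)\big)=o(r^N)$ at an $\Haus^{N-1}$-density point of $\Sigma$ (which follows from $|f_1-f_2|\le 2L\,\mathrm{dist}(\cdot,\Sigma')$ on the projected set $\Sigma'$); after careful bookkeeping this does yield the conclusion. A cleaner alternative is to identify the trace with the weak$^*$ limit of $B\cdot\vec n$ on parallel graphs (as in Theorem~\ref{t:wc}), which makes locality immediate.
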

Starting from the identity \eqref{trace_on_sigma}, it is possible to introduce the notion of normal trace on general bounded, oriented, Lipschitz continuous hypersurfaces $\Sigma\subseteq\R^N$. Indeed, once the orientation of $\vec n_\Sigma$ is fixed, we can find $\Lambda_1 \subseteq \R^N$  such that $\Sigma \subseteq\partial \Lambda_1$ and the normal vectors $n_\Sigma$ and $n_1$ coincide. Then we can define
\begin{equation}
  \trace^- (B, \Sigma): = \trace (B, \partial \Lambda_1). 
\end{equation}
Analogously, if $\Lambda_2 \subseteq \R^N$ is an open subset  such that $\Sigma\subseteq\partial\Lambda_2$,  and $\vec n_{2}=-\vec{n}_{\Sigma}$, we can define 
\begin{equation}
  \trace^+ (B, \Sigma): = - \trace (B, \partial \Lambda_2). 
\end{equation}
Note that we have the formula
\begin{equation} \label{divergence_on_boundary}
  (\divtx B) \res \Sigma= \Big( \trace^+(B,\Sigma)-\trace^-(B,\Sigma) \Big) \Haus^{N-1}\res\Sigma.
\end{equation}
In particular, $\trace^+$ and $\trace^-$ coincide $\Haus^{N-1}$-a.e.~on $\Sigma$ if and only if $\Sigma$ is negligible for the measure $\divtx B$.

We now go over some space continuity results established in~\cite[\S 3]{AmbrosioCrippaManiglia}. We first recall the definition of a family of graphs.  
\begin{definition}
\label{d:fg}
Let $I \subseteq \R$ be an open interval. A family of oriented surfaces $\{ \Sigma_r \}_{r \in I} \subseteq \R^N$ is a family of graphs if there are a bounded open set $D \subseteq \R^{N-1}$ and a Lipschitz function $f : D \to \R$ such that the following holds. There is a system of coordinates $(x_1, \cdots, x_N)$ such that, for every $r \in I$, 
$$
   \Sigma_r = \big\{ (x_1, \dots, x_N): \; f(x_1, \dots, x_{N-1}) - x_N =r  \big\}  
$$
and $\Sigma_r$ is oriented by the normal $(-  \nabla f, 1)/\sqrt{1 + |\nabla f|^2}$. 
\end{definition}
We now quote~\cite[Theorem 3.7]{AmbrosioCrippaManiglia}. 
\begin{theorem}
\label{t:wc} Let $B \in \mathcal M_{\infty}(\R^N)$ and let $\{ \Sigma_r \}_{r \in I}$ be a family of graphs as in Definition~\ref{d:fg}. 
Given $r_0 \in I$, we define the functions $\alpha_0, \alpha_r: D \to \R$  by setting 
$$
    \alpha_0 (x_1, \dots, x_{N-1}): = \tr^- (B, \Sigma_{r_0}) \big(x_1, \dots, x_{N-1}, f(x_1, \dots, x_{N-1})-r_0 \big)
$$
and 
$$
    \alpha_r (x_1, \dots, x_{N-1}): = \tr^+ (B, \Sigma_{r}) \big(x_1, \dots, x_{N-1}, f(x_1, \dots, x_{N-1})-r\big).
$$
Then we have 
$$
     \alpha_r \weaks\alpha_0 
     \quad 
     \text{weakly$^{\ast}$ in $L^{\infty} (D, \Leb^{N-1} \res D )$ as $r \to r_0^+$.}
$$
\end{theorem}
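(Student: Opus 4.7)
The plan is to compare $\alpha_r$ and $\alpha_0$ through the divergence identity \eqref{e:normal_trace} applied on the thin region trapped between $\Sigma_{r_0}$ and $\Sigma_r$. I would fix $\varphi \in C^\infty_c(D)$ and, to have a Lipschitz region to work on, choose an auxiliary bounded open set $D'$ with Lipschitz boundary satisfying $\mathrm{supp}\,\varphi \Subset D' \Subset D$, together with a cutoff $\chi \in C^\infty_c(\R)$ equal to $1$ on a neighborhood of the (bounded) set $\{f(x') - s : x' \in \overline{D'},\ |s - r_0| \le \delta\}$ for some small $\delta>0$. I would then set $\Phi(x',x_N) := \varphi(x')\chi(x_N)$ and, for $r_0 < r < r_0+\delta$, introduce the Lipschitz open slab
$$
U_r := \{(x',x_N) : x' \in D',\ f(x') - r < x_N < f(x') - r_0\},
$$
whose boundary consists of a top piece contained in $\Sigma_{r_0}$, a bottom piece contained in $\Sigma_r$, and a lateral cylinder lying over $\partial D'$.

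The key step is to apply Definition~\ref{nomal_trace} with $\Lambda = U_r$ and test function $\Phi$:
$$
\int_{U_r} \nabla\Phi \cdot B\, dx + \int_{U_r} \Phi\, d(\divtx B) = \int_{\partial U_r} \tr(B,\partial U_r)\, \Phi\, d\Haus^{N-1}.
$$
Since $\varphi$ vanishes on $\partial D'$, the lateral cylinder contributes nothing. On the top piece the outward unit normal of $U_r$ coincides with the orientation $(-\nabla f, 1)/\sqrt{1+|\nabla f|^2}$ of $\Sigma_{r_0}$, whereas on the bottom piece it is the opposite of the orientation of $\Sigma_r$; using \eqref{trace_on_sigma} and the very definitions of $\tr^\pm$, these two pieces contribute respectively $\tr^-(B,\Sigma_{r_0})$ and $-\tr^+(B,\Sigma_r)$. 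Unfolding both surface integrals over $D$ with area element $\sqrt{1+|\nabla f|^2}\, dx'$ and using $\chi \equiv 1$ on the relevant range, the right-hand side reduces to
$$
\int_D \big[\alpha_0(x') - \alpha_r(x')\big]\, \varphi(x')\, \sqrt{1+|\nabla f(x')|^2}\, dx'.
$$

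I would then let $r\to r_0^+$: the first term on the left is bounded by $\|\nabla\Phi\|_\infty \|B\|_\infty |U_r|$ with $|U_r|\to 0$, while the second is bounded by $\|\Phi\|_\infty |\divtx B|(U_r)$ with $|\divtx B|(U_r)\to 0$ by continuity of the finite measure $|\divtx B|\res \overline{U_{r_0+\delta}}$ on the nested family $U_r \searrow \emptyset$. Hence
$$
\int_D \alpha_r\, \varphi\, \sqrt{1+|\nabla f|^2}\, dx' \;\longrightarrow\; \int_D \alpha_0\, \varphi\, \sqrt{1+|\nabla f|^2}\, dx' \qquad \forall \varphi \in C^\infty_c(D).
$$
Finally, the uniform bound $\|\alpha_r\|_\infty \le \|B\|_\infty$ from \eqref{Linfty_bound}, combined with density of $C^\infty_c(D)$ in $L^1(D)$ and the fact that $\sqrt{1+|\nabla f|^2}$ is a fixed $L^\infty(D)$ function bounded from below by $1$, would upgrade the previous convergence to the claimed weak-$*$ convergence of $\alpha_r$ to $\alpha_0$ in $L^\infty(D,\Leb^{N-1}\res D)$.

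The delicate point I expect is the correct matching of the outward normals of the slab $U_r$ with the prescribed orientations of $\Sigma_r$ and $\Sigma_{r_0}$, and hence the assignment of signs in front of $\tr^+$ and $\tr^-$; this is exactly what the locality formula \eqref{trace_on_sigma} is designed for. The remainder is a routine vanishing-slab computation, once the auxiliary $D'$ is chosen so that $U_r$ has Lipschitz boundary, which is automatic since both $\partial D'$ and the graph of the Lipschitz function $f$ are Lipschitz.
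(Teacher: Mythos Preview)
The paper does not prove this theorem: it is simply quoted as \cite[Theorem~3.7]{AmbrosioCrippaManiglia}, with no argument given. So there is no ``paper's own proof'' to compare against.

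That said, your proposal is correct and is essentially the standard argument one would expect for this statement (and, in outline, the one in \cite{AmbrosioCrippaManiglia}): apply the Gauss--Green identity \eqref{e:normal_trace} on the shrinking slab $U_r$ between $\Sigma_{r_0}$ and $\Sigma_r$, note that the volume terms are $o(1)$ while the boundary terms give exactly $\int_D(\alpha_0-\alpha_r)\varphi\sqrt{1+|\nabla f|^2}\,dx'$, and then use the uniform bound \eqref{Linfty_bound} together with density to pass from test functions to all of $L^1(D)$. Your identification of the orientations is correct: on the top face of $U_r$ the outward normal agrees with the prescribed orientation of $\Sigma_{r_0}$, yielding $\tr^-$, while on the bottom face it is opposite to that of $\Sigma_r$, yielding $-\tr^+$. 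The final density step is fine once you observe that $g\mapsto g/\sqrt{1+|\nabla f|^2}$ is a bounded isomorphism of $L^1(D)$, so functions of the form $\varphi\sqrt{1+|\nabla f|^2}$ with $\varphi\in C^\infty_c(D)$ are dense in $L^1(D)$.
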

\section{Proof of Theorem~\ref{t:wp}}
\label{s:pwp}
\subsection{Preliminary results}
In this section we establish some results that are preliminary to the distributional formulation of problem~\eqref{e:clpb}.  
\begin{lemma}\label{l:preli2}
  Let $B$ be a locally bounded vector field on $\R^N$ and let $\{ \rho_\eps \}_{0< \eps <1}$ be a standard family of mollifiers satisfying $\mathrm{supp} \, \rho_\eps \subseteq B_\eps (0)$ for every $\eps \in ]0, 1[$. 

The divergence of $B$ is a locally finite measure if and only if for any $K$ compact in $\R^N$ there exists a positive constant $C$ such that the inequality 
\begin{equation}
\label{e:unifb}
  \|\divtx B \ast \rho_\eps\|_{L^1(K)}\leq C
\end{equation}\label{eq: preliminary_lemma}
holds uniformly in $\eps \in ]0, 1[$. 
\end{lemma}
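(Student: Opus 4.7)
The plan is to prove the two implications by comparing the distributional divergence of $B$ with the pointwise divergence of the smooth mollification $B_\eps := B * \rho_\eps$. The underlying fact, exploited in both directions, is that since $B \in L^\infty_{\loc}(\R^N;\R^N)$ the function $B_\eps$ is smooth and one has the pointwise identity $\divtx B_\eps = (\divtx B) * \rho_\eps$; moreover $\divtx B_\eps \to \divtx B$ in the sense of distributions as $\eps \to 0$, because $B_\eps \to B$ in $L^1_{\loc}$.

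For the necessity direction, I would assume that $\mu := \divtx B$ is a locally finite Radon measure and fix a compact $K \subseteq \R^N$. Setting $K' := K + \overline{B_1(0)}$, the nonnegativity of the mollifier yields the pointwise bound $|\mu * \rho_\eps| \leq |\mu| * \rho_\eps$, and Fubini gives
\begin{equation*}
\|\divtx B * \rho_\eps\|_{L^1(K)} \leq \int \Bigl( \int_K \rho_\eps(x-y)\,dx \Bigr)\,d|\mu|(y) \leq |\mu|(K')
\end{equation*}
uniformly in $\eps \in \,]0,1[$, since the inner integral is at most $1$ and vanishes outside $K + \overline{B_\eps(0)} \subseteq K'$.

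For the converse, I would fix $\varphi \in C^\infty_c(\R^N)$ supported in some compact $K$ and use the distributional identity
\begin{equation*}
\langle \divtx B,\varphi \rangle = -\int B \cdot \nabla \varphi \,dx = \lim_{\eps \to 0} \int \bigl((\divtx B) * \rho_\eps \bigr)\,\varphi \,dx,
\end{equation*}
which follows from $B_\eps \to B$ in $L^1_{\loc}$ together with an integration by parts. Estimating the right-hand side by H\"older and invoking the hypothesis \eqref{e:unifb} yields $|\langle \divtx B,\varphi\rangle| \leq C_K \|\varphi\|_\infty$, so $\divtx B$ is a distribution of order zero on every compact subset of $\R^N$. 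The main, and most delicate, step is to deduce from this that $\divtx B$ is induced by a locally finite Radon measure: I would extend the functional $\varphi \mapsto \langle \divtx B,\varphi\rangle$ by density from $C^\infty_c(\R^N)$ to $C_c(\R^N)$, apply the Riesz--Markov representation theorem to produce a signed Radon measure on each compact, and then patch these local representations via an exhaustion of $\R^N$ by compacts to obtain the global conclusion.
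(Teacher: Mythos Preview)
Your proof is correct. The necessity direction you give is exactly the computation the paper leaves implicit. For the sufficiency direction, the paper takes a slightly different route: rather than bounding the pairing $\langle \divtx B,\varphi\rangle$ and invoking Riesz--Markov, it simply observes that $(\divtx B)\ast\rho_\eps \to \divtx B$ in the sense of distributions, and that the uniform $L^1$ bound on compacts lets one extract a subsequence converging weakly in the sense of measures; the two limits must agree, so $\divtx B$ is a measure. Your approach makes the Riesz representation step explicit, whereas the paper's argument hides it inside the weak-$\ast$ compactness of bounded families of measures. Both are standard and essentially equivalent; the paper's is more concise, yours more self-contained.
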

\begin{proof}
  If $\divtx B$ is a locally finite measure the inequality \eqref{e:unifb} is satisfied on any compact $K$ for some constant $C$ independent from $\eps$.

On the other hand, the sequence $(\divtx B) \ast\rho_\eps = \divtx (B \ast\rho_\eps)$ converges to $\divtx B$ in the sense of distributions and the uniform bound \eqref{e:unifb} implies that we can extract a subsequence which converges weakly in the sense of measures.   
\end{proof}
\begin{lemma}\label{l:preli1}
 Let $\Lambda \subseteq \R^N$ be an open subset with uniformly Lipschitz continuous boundary and let $B$ belong to $\mathcal M_\infty (\Lambda)$. Then the vector field
 $$
     \tilde B(z) : = 
     \left\{
     \begin{array}{ll}
     B(z) & z \in \Lambda \\
     0 & \text{otherwise} \\
     \end{array}
     \right.
 $$
 belongs to $\mathcal M_{\infty} (\R^N)$.  
\end{lemma}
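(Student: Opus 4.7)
The plan is to compute the distributional divergence of $\tilde B$ on $\R^N$ and exhibit it as an explicit combination of two locally finite Radon measures, namely the trivial extension of $\divtx B$ and a surface measure on $\partial\Lambda$ weighted by the normal trace of $B$. Boundedness of $\tilde B$ is trivial, since $\|\tilde B\|_{L^\infty(\R^N)} = \|B\|_{L^\infty(\Lambda)}$, so the only point is to identify $\divtx\tilde B$ as a locally bounded measure.

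The first main step is the following direct computation: for every test function $\varphi\in\mathcal C^\infty_c(\R^N)$, by the definition of $\tilde B$,
\begin{equation*}
  \langle \divtx\tilde B,\varphi\rangle \;=\; -\int_{\R^N}\tilde B\cdot\nabla\varphi\,dx \;=\; -\int_{\Lambda}B\cdot\nabla\varphi\,dx.
\end{equation*}
Next, applying Definition~\ref{nomal_trace} to $B\in\mathcal M_\infty(\Lambda)$ and the test function $\varphi$, I rewrite the right-hand side and obtain the identity
\begin{equation*}
  \langle \divtx\tilde B,\varphi\rangle \;=\; -\langle\tr(B,\partial\Lambda),\varphi\rangle \,+\, \int_\Lambda \varphi\,d(\divtx B).
\end{equation*}
In other words, as distributions on $\R^N$,
\begin{equation*}
  \divtx\tilde B \;=\; \bigl(\divtx B\bigr)\res\Lambda \,-\, \tr(B,\partial\Lambda)\,\Haus^{N-1}\res\partial\Lambda.
\end{equation*}

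The last step is to check that both terms on the right-hand side are locally finite Radon measures on $\R^N$. The first is the trivial extension of $\divtx B$, which is locally finite by hypothesis. For the second, Lemma~\ref{proposition3.2_ACM} gives $\tr(B,\partial\Lambda)\in L^\infty(\partial\Lambda)$ with $\|\tr(B,\partial\Lambda)\|_{L^\infty}\le \|B\|_{L^\infty(\Lambda)}$, and the uniformly Lipschitz assumption on $\partial\Lambda$ ensures that $\Haus^{N-1}\res\partial\Lambda$ is a locally finite measure on $\R^N$; their product is therefore a locally finite measure as well. Summing the two contributions shows that $\divtx\tilde B$ is a locally finite Radon measure, so $\tilde B\in\mathcal M_\infty(\R^N)$.

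The main subtlety I expect is ensuring that the surface measure $\Haus^{N-1}\res\partial\Lambda$ is locally finite on $\R^N$: this is precisely what the uniformly Lipschitz hypothesis on $\partial\Lambda$ buys us (as opposed to a merely locally Lipschitz boundary that could accumulate globally). Everything else is a bookkeeping exercise built on Definition~\ref{nomal_trace} and Lemma~\ref{proposition3.2_ACM}; in particular, no appeal to the mollification criterion of Lemma~\ref{l:preli2} is needed, although one could alternatively deduce the uniform bound~\eqref{e:unifb} from the explicit decomposition above.
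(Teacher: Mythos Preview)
Your proof is correct but takes a genuinely different route from the paper's. The paper argues via the mollification criterion of Lemma~\ref{l:preli2}: it splits a compact set $K$ into the region at distance $\ge\eps$ from $\partial\Lambda$, the $\eps$-tube $\partial\Lambda_\eps$, and the exterior, and bounds $\|\divtx(\tilde B\ast\rho_\eps)\|_{L^1}$ on each piece; the key estimate on the tube uses $\Leb^N(K\cap\partial\Lambda_\eps)\le C_\ast\eps$ together with $\|\nabla\rho_\eps\|_{L^1}\le C_{\ast\ast}/\eps$. You instead read off the explicit decomposition
\[
\divtx\tilde B=(\divtx B)\res\Lambda-\tr(B,\partial\Lambda)\,\Haus^{N-1}\res\partial\Lambda
\]
directly from Definition~\ref{nomal_trace} and Lemma~\ref{proposition3.2_ACM}. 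Your approach is shorter and more informative (it exhibits $\divtx\tilde B$ explicitly), while the paper's is more self-contained, relying only on the elementary mollification lemma rather than on the trace theory quoted from~\cite{AmbrosioCrippaManiglia}. One small remark: when you write that $(\divtx B)\res\Lambda$ is ``locally finite by hypothesis'', the hypothesis literally gives local finiteness on $\Lambda$, not on $\R^N$; what you actually use is that $|\divtx B|(K\cap\Lambda)<\infty$ for every compact $K\subseteq\R^N$, which is implicit in the very well-posedness of the formula~\eqref{e:normal_trace} in Definition~\ref{nomal_trace}. The same implicit point underlies the paper's claim that $\|\divtx(\tilde B\ast\rho_\eps)\|_{L^1(K\cap(\Lambda\setminus\partial\Lambda_\eps))}$ is uniformly bounded, so this is not a defect peculiar to your argument.
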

\begin{proof}
We only need to check that the distributional divergence of $\tilde B$ is a locally bounded Radon measure.  Given $\eps \in ]0, 1[$ we define the $\eps$-neighborhood of $\partial\Lambda$ as 
$$
   \displaystyle 
   \partial \Lambda_\eps= \{ z \in \R^N \: : \: \textrm{dist}(z,\partial\Lambda)<\eps \}.
$$
Any compact subset $K$ of $\R^N$ can be decomposed as follows:   
\begin{equation}
  K =\big( 
  K \cap
  ( \Lambda \setminus
  \partial \Lambda_\eps)
  \big) 
  \cup
  \big(K   \cap  
  \partial \Lambda_\eps \big) 
  \cup 
  \big(
  K \setminus(\Lambda \cup 
   \partial \Lambda_\eps) 
  \big). 
\end{equation}
Also, note that $\divtx(\tilde B\ast\rho_\eps)$ is zero on $K \setminus ( \Lambda \cup \partial \Lambda_\eps)$ and that its $L^1$ norm is uniformly bounded on  $K\cap (\Lambda \setminus \partial \Lambda_\eps)$. 
Moreover, 
\begin{equation}\label{L1_norm_sigma_eps}
  \begin{split}
    \int_{K \cap \partial \Lambda_\eps}
     |\div(\tilde B\ast\rho_\eps)| \, dz & = 
    \int_{K\cap  \partial \Lambda_\eps} 
    |\tilde B\ast\nabla\rho_\eps| \, dz \\
    & \leq \|\tilde  B \|_{L^\infty (\R^N)} 
    \| \nabla \rho_\eps \|_{L^1 
    ( \R^N)}
    \Leb^N (K \cap \partial
     \Lambda_\eps ). \\  
\end{split}
\end{equation}
We observe that $\|\tilde  B \|_{L^\infty (\R^N)} = \| B\|_{L^{\infty} (\Lambda)}$, that
$\Leb^N (K \cap \partial
     \Lambda_\eps ) \leq C_\ast \eps$ and that $\| \nabla \rho_\eps \|_{L^1 
    ( \R^N)} \leq C_{\ast \ast} / \eps$ for suitable constants $C_\ast >0$ and $C_{\ast \ast}>0$. Hence, 
 $$
     \int_{K \cap \partial \Lambda_\eps}
     |\div(\tilde B\ast\rho_\eps)| \, dz
     \leq \| B \|_{L^\infty (\Lambda)} C_\ast 
     C_{\ast \ast}
 $$     
and by relying on Lemma \ref{l:preli2} we conclude. 
\end{proof}
\subsection{Distributional formulation of problem~\eqref{e:clpb}}
\label{ss:df}
We can now discuss the distributional formulation of~\eqref{e:clpb}. The following result provides a distributional formulation of the normal trace of $b$ and $bu$ on $]0, T[ \times \partial \Omega$. 
\begin{lemma}
\label{l:wt}
Let $\Omega \subseteq \R^d$ be an open set with uniformly Lipschitz boundary and let $T>0$. Assume that $b \in L^{\infty} (]0, T[ \times \Omega; \R^d)$ is a vector 
field such that $\div b$ is a locally finite Radon measure on $]0, T[ \times \Omega$. Then there is a unique function, which in the following we denote by $\tr b$, that belongs to {$L^{\infty} (]0, T[ \times \partial \Omega) $} and satisfies 
\begin{equation}
\label{e:trb}
\begin{split}
   \int_0^T \int_{\partial \Omega} \tr b  \, \varphi \, d \Haus^{d-1} dt  - &
     \int_\Omega \varphi(0, x) \, dx =  
     \int_0^T  \int_\Omega  \partial_t \varphi + b \cdot  
     \nabla \varphi \, dx dt   \\ & + 
     \int_0^T  \int_\Omega  \varphi \, d( \div b ) 
     \quad  \forall \varphi \in \mathcal C^{\infty}_c 
     ([0, T[ \times \R^d)  . 
\end{split}
\end{equation}
Also, if $w \in L^{\infty}(]0, T[ \times \Omega)$ and $f \in 
L^{\infty}(]0, T[ \times \Omega)$ satisfy 
\begin{equation}
\label{e:snsdd}
   \int_0^T  \int_\Omega  w \big( \partial_t \eta+ b \cdot  
     \nabla \eta \big) dx dt + 
     \int_0^T \int_\Omega f \eta \, dx dt 
     =0   
     \quad  \forall \eta \in \mathcal C^{\infty}_c 
     (]0, T[ \times \Omega),    
 \end{equation}
then there are two uniquely determined functions, which in the following we denote by $\tr (bw) \in L^{\infty} (]0, T[ \times \partial \Omega) $ and $w_0 \in L^\infty (\Omega)$, that satisfy 
\begin{equation}
\label{e:trbu}
\begin{split}
   \int_0^T \int_{\partial \Omega} & \tr (bw)  \varphi \, d \Haus^{d-1} dt  - 
     \int_\Omega \varphi(0, \cdot) w_0 \, dx \\
     & =  
     \int_0^T  \int_\Omega w \big( \partial_t \varphi + b \cdot  
     \nabla \varphi \big) dx dt  + 
     \int_0^T \int_\Omega f \varphi \, dx dt 
     \quad 
     \forall \, \varphi \in \mathcal C^{\infty}_c 
     ([0, T[ \times \R^d). \\
     \end{split} 
     \end{equation} 
\end{lemma}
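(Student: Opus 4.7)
My plan is to reduce the statement to the normal-trace machinery of Section~\ref{s:pre} by viewing $\Lambda := \,]0,T[ \times \Omega$ as an open subset of $\R^{d+1}$ (with uniformly Lipschitz boundary, inherited from $\Omega$) and associating to the data suitable $(d+1)$-dimensional measure-divergence vector fields. The boundary $\partial\Lambda$ decomposes, up to $\Haus^d$-null sets, into the three pieces $\{0\}\times\overline\Omega$, $\{T\}\times\overline\Omega$ and $[0,T] \times \partial\Omega$; the objects $\tr b$, $\tr(bw)$, $w_0$ will arise as restrictions of the ACM normal trace of these fields to the appropriate pieces of $\partial\Lambda$.

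For the first part I take $B := (1,b) \in L^\infty(\Lambda;\R^{d+1})$, so that $\divtx B = \div b$ is by assumption a locally finite Radon measure on $\Lambda$; hence $B \in \M_\infty(\Lambda)$. Definition~\ref{nomal_trace} together with Lemma~\ref{proposition3.2_ACM} produces $\tau \in L^\infty(\partial\Lambda)$ satisfying
\[
\int_{\partial\Lambda} \tau\,\tilde\varphi\,d\Haus^d = \int_\Lambda \bigl(\partial_t\tilde\varphi + b\cdot \nabla\tilde\varphi\bigr)\,dxdt + \int_\Lambda \tilde\varphi\,d(\div b) \qquad \forall\,\tilde\varphi \in \mathcal C^\infty_c(\R^{d+1}),
\]
and I set $\tr b := \tau \big|_{]0,T[ \times \partial\Omega}$. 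The key computation is that $\tau \equiv -1$ on $\{0\}\times\Omega$. I verify this by plugging in tensor test functions $\tilde\varphi(t,x) = \chi(t)\psi(x)$ with $\psi \in \mathcal C^\infty_c(\Omega)$ and $\chi \in \mathcal C^\infty_c(]-\infty, T[)$ such that $\chi(0) \neq 0$: the left-hand side reduces to $\chi(0)\int_\Omega \tau(0,x)\psi(x)\,dx$ (the other boundary pieces contribute nothing since $\chi(T) = 0$ and $\psi|_{\partial\Omega} = 0$), and on the right the two spatial integrals $\int_\Lambda \chi\, b\cdot\nabla\psi\,dxdt$ and $\int_\Lambda \chi\psi\,d(\div b)$ cancel after a standard approximation by temporal cutoffs that makes $\chi\psi$ compactly supported in $\Lambda$, leaving $-\chi(0)\int_\Omega\psi\,dx$. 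This forces $\tau(0,\cdot) \equiv -1$. For an arbitrary $\varphi \in \mathcal C^\infty_c([0,T[ \times \R^d)$, I pick any smooth compactly supported extension $\tilde\varphi$ to $\R^{d+1}$ (its values outside $\overline\Lambda$ are irrelevant), plug into the Gauss--Green identity, and use $\tilde\varphi(T,\cdot) \equiv 0$ to read off \eqref{e:trb}. Uniqueness of $\tr b$ in $L^\infty(]0,T[\times\partial\Omega)$ is immediate since the difference of two candidates annihilates every test function, and the restrictions of such test functions to $]0,T[\times\partial\Omega$ are dense in $L^1$.

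For the second part I repeat the argument with $\tilde B := (w, bw) \in L^\infty(\Lambda;\R^{d+1})$. Hypothesis \eqref{e:snsdd} reads precisely as $\divtx \tilde B = f$ in $\mathcal D'(\Lambda)$, and since $f \in L^\infty$ this is a locally finite Radon measure, so $\tilde B \in \M_\infty(\Lambda)$. Definition~\ref{nomal_trace} and Lemma~\ref{proposition3.2_ACM} provide $\tilde\tau \in L^\infty(\partial\Lambda)$, and I define
\[
\tr(bw) := \tilde\tau \big|_{]0,T[\times\partial\Omega}, \qquad w_0 := -\tilde\tau \big|_{\{0\}\times\Omega}.
\]
Decomposing the Gauss--Green identity for $\tilde B$ along the three boundary pieces, again using $\tilde\varphi(T,\cdot)\equiv 0$ to kill the top contribution, reproduces \eqref{e:trbu} verbatim. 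Uniqueness of the pair follows by specializing test functions: $\varphi$ with $\varphi(0,\cdot)\equiv 0$ isolates $\tr(bw)$, while $\varphi$ supported away from $\partial\Omega$ isolates $w_0$.

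The only genuinely non-routine step is the identification $\tau \equiv -1$ on $\{0\}\times\Omega$ in the first part, which is the unique place where the specific structure of $B=(1,b)$ -- the smoothness of its time component and the fact that $\div b$ involves no time derivative of the rough part -- actually enters the argument. No analogous computation is needed in the second part, since the statement does not prescribe $w_0$ beyond its $L^\infty$ character and it can simply be \emph{defined} as $-\tilde\tau\big|_{\{0\}\times\Omega}$.
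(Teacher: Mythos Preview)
Your proposal is correct and follows essentially the same route as the paper: introduce the space--time field $B=(1,b)$ on $\Lambda=\,]0,T[\times\Omega$, observe $\divtx B=\div b$ so $B\in\mathcal M_\infty$, invoke the normal-trace machinery of Section~\ref{s:pre}, and then read off $\tr b$ and the initial contribution from the restriction of $\tr(B,\partial\Lambda)$ to the lateral and bottom pieces of $\partial\Lambda$; the second part is the identical argument applied to $C=(w,bw)$ with $\divtx C=f$.

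The only difference worth noting is that the paper first extends $B$ by zero to all of $\R^{d+1}$ and invokes Lemma~\ref{l:preli1} to check that the extension lies in $\mathcal M_\infty(\R^{d+1})$, whereas you apply Definition~\ref{nomal_trace} directly with $B\in\mathcal M_\infty(\Lambda)$. Both are legitimate; your route is marginally more direct for the purposes of this lemma, while the paper's extension is reused later in the renormalization argument (\S\ref{sss:ren}). Your explicit verification that $\tau\equiv -1$ on $\{0\}\times\Omega$ via tensor test functions is more detailed than the paper's one-line observation, and the temporal-cutoff approximation you invoke to justify the cancellation of the two spatial integrals is indeed needed (since $\chi\psi$ is not compactly supported in $\Lambda$ when $\chi(0)\neq 0$), but it goes through because $\div b$ is a locally finite measure on $\Lambda$ and $\psi$ has compact support in $\Omega$.
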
     
Note that requirement~\eqref{e:snsdd} is nothing but the distributional formulation of the equation
\begin{equation}
\label{e:effe}
    \partial_t w + \div (bw ) = f
    \quad \text{in $]0, T[ \times \Omega$}.  
\end{equation}  
Also note that the existence of the function $w_0$ follows from Lemma~1.3.3 in \cite{Dafermos}, the new part is the existence of the function $\tr (bw)$.
\begin{proof}
We first establish the existence of a function $\tr b $ satisfying~\eqref{e:trb}. Note that the uniqueness of such a function follows from the arbitrariness of the test function $\varphi$. We define the vector field $B: \R^{d+1} \to \R^{d+1}$ by setting
\begin{equation}
\label{e:B}
    B(t, x) : = 
    \left\{
    \begin{array}{lll}
              (1, b) & (t, x) \in ]0, T[ \times \Omega \\ 
             \; \;  \; 0 & \text{elsewhere in $\R^{d+1}$} \\
    \end{array}
    \right.
\end{equation}
and we note that 
$
    \divtx B \big|_{]0, T[ \times \Omega} = \div b,  
$
therefore $B$ satisfies the hypotheses of Lemma~\ref{l:preli1} provided that $\Lambda: = ]0, T[ \times \Omega$. Hence,  $ B \in \mathcal M_{\infty} (\R^{d+1}).$
We apply Lemma~\ref{proposition3.2_ACM} and we observe that 
$
    \tr (B, \partial \Lambda) \big|_{\{ 0 \} \times \Omega} \equiv - 1. 
$
We can then conclude by setting 
$$
    \tr b : = \tr (B, \partial \Lambda) 
    \big|_{]0, T [ \times \partial \Omega}.
$$
The existence of the function $\tr (bw )$ satisfying~\eqref{e:trbu} can be established by setting 
\begin{equation}
\label{e:c}
    C(t, x) : = 
    \left\{
    \begin{array}{lll}
              (w, bw) & (t, x) \in ]0, T[ \times \Omega \\ 
             \; \;  \; 0 & \text{elsewhere in $\R^{d+1}$} \\
    \end{array}
    \right.
\end{equation}
and observing that condition~\eqref{e:snsdd} implies that
$
    \divtx C \big|_{]0, T[ \times \Omega} = f.   
$
We can then conclude by using the same argument as before, by setting 
\begin{equation}
\label{e:tc}
        w_0 : = - \tr (C, \partial \Lambda) \big|_{\{ 0 \} \times \Omega}
        \quad \text{and} \quad 
        \tr (bw) : = \tr (C, \partial \Lambda) \big|_{]0, T[ \times \partial \Omega}.
\end{equation}
\end{proof}
We now state the rigorous formulation of problem~\eqref{e:clpb}. 
\begin{definition}
\label{d:rf}
Let $\Omega \subseteq \R^d$ be an open set with uniformly Lipschitz boundary. Assume that $b \in L^{\infty} (]0, T[ \times \Omega; \R^d)$ is a vector field such that $\div b$ is a locally finite Radon measure on $]0, T[ \times \Omega.$ A distributional solution of~\eqref{e:clpb} is 
a function $u \in L^{\infty} (]0, T[ \times \Omega)$ such that
\begin{itemize}
\item[i)] $u$ satisfies equation~\eqref{e:snsdd} with $f\equiv0$;
\item[ii)] $w_0= \bar u$;
\item[iii)] $\tr (bu) = \bar g \tr b$ on the set $\Gamma^-$ which is defined as follows:
$$
   \Gamma^- : = \big\{ (t, x) \in ]0, T[ \times \partial \Omega: \; (\tr \, b) (t, x) < 0 \big\}. 
$$
\end{itemize}    
\end{definition}
Note that in Definition~\ref{d:rf} we only assume $f \equiv 0$ for the sake of simplicity. By removing the condition $f \equiv 0$ from point i) we obtain the distributional formulation of the initial-boundary value problem obtained by replacing the first line of~\eqref{e:clpb} with~\eqref{e:effe}. 
\subsection{Proof of Theorem~\ref{t:wp}}
First, we observe that the existence of a solution of~\eqref{e:clpb} is established in~\cite{CDS:hyp} by closely following an argument due to Boyer~\cite{Boyer}. The argument to establish uniqueness is organized in two main steps: in \S~\ref{sss:ren} we show that, under the hypotheses of Theorem~\ref{t:wp}, distributional solutions of~\eqref{e:clpb} enjoy renormalization properties. Next, in \S~\ref{sss:gro} we conclude by relying on a by now standard argument based on Gronwall Lemma. 
\subsubsection{Renormalization properties}
\label{sss:ren} We fix $u$ distributional solution of~\eqref{e:clpb} and we proceed according to the following steps.

{\sc Step 1:} we use the same argument as in Ambrosio~\cite{Ambrosio:trabv} to establish renormalization properties ``inside" the domain. More precisely, the Renormalization Theorem~\cite[Theorem 3.5]{Ambrosio:trabv} implies that the function $u^2$ satisfies
\begin{equation}
\label{e:a}
\begin{split}
   \int_0^T  \int_\Omega  u^2 \big( \partial_t \psi+ b \cdot  
     \nabla \psi \big) dx dt - \int_0^T \int_\Omega u^2 \div b \, \psi \, dx dt &
     = - \int_\Omega \bar u^2 \psi(0, \cdot) dx \\
     &
       \forall \psi \in \mathcal C^{\infty}_c 
     ([0, T[ \times \Omega).    \\
     \end{split}
\end{equation}

{\sc Step 2:} we establish a trace renormalization property. 

First, we observe that by combining hypothesis 3 in the statement of Theorem~\ref{t:wp} with Theorem 3.84 in the book by Ambrosio, Fusco and Pallara~\cite{AmbrosioFuscoPallara} we obtain that the vector field $B$ defined as in~\eqref{e:B} satisfies $B(t, \cdot) \in BV (\Omega_\ast)$ for every open and bounded set $\Omega_\ast \subseteq \R^d$ and for $\Leb^1$-a.e. $t \in ]0, T[$.  

Next, we recall that the proof of Lemma~\ref{l:wt} ensures that the vector field $uB$ belongs to $\mathcal M_\infty (\R^{d+1})$.  We can then apply~\cite[Theorem 4.2]{AmbrosioCrippaManiglia}, which implies the following trace renormalization property:
\begin{equation}
\label{e:trn}
           \tr (u^2 b) (t, x) = 
           \left\{
           \begin{array}{cc}
           \displaystyle{ 
           \left( \frac{\tr (ub) }{ \tr \, b }\right)^2 } \tr \, b & 
           \quad   \tr \, b (t, x) \neq 0 \\
           \phantom{ciao} \\
           0 & \qquad \tr \, b (t, x) = 0. \phantom{\int} \\
           \end{array}
           \right.
\end{equation}
Some remarks are here in order. First, to define $\tr (u^2 b)$ we recall~\eqref{e:a}, use Lemma~\ref{l:wt} and set 
\begin{equation}
\label{e:tru2}
    \tr (u^2 b) : = \tr (u^2 B, \partial \Lambda)\big|_{]0, T[ \times \partial \Omega},
\end{equation}
where $\Lambda = ]0, T[ \times \Omega$. 

Second, note that, strictly speaking, the statement of~\cite[Theorem 4.2]{AmbrosioCrippaManiglia} requires that the vector field $B$ has $BV$ regularity with respect to the $(t, x)$-variables, which in our case would imply some control on the time derivative of $b$. However, by examining the proof of~\cite[Theorem 4.2]{AmbrosioCrippaManiglia} and using the particular structure of the vector field $B$ one can see that only space regularity is needed to establish~\eqref{e:trn}.  

{\sc Step 3:} by combining~\eqref{e:a} with~\eqref{e:tru2} and recalling Lemma \ref{l:wt} we infer that  
\begin{equation}
\label{e:trbu2}
\begin{split}
   \int_0^T \int_{\partial \Omega} & \tr (u^2 b)  \varphi \, d \Haus^{d-1} dt  - 
     \int_\Omega \bar u^2 \varphi(0, x) \, dx 
      =  
     \int_0^T  \int_\Omega u^2 \big( \partial_t \varphi + b \cdot  
     \nabla \varphi \big) dx dt  \\ & - 
     \int_0^T \int_\Omega u^2 \div b \, \varphi \, dx dt 
     \qquad 
     \forall \, \varphi \in \mathcal C^{\infty}_c 
     ([0, T[ \times \R^d). \\
     \end{split} 
     \end{equation} 
\subsubsection{Conclusion of the proof of Theorem~\ref{t:wp}}
\label{sss:gro}
We conclude by following a by now standard argument, see for example the expository work by De Lellis~\cite[Proposition 1.6]{CDL:bourbaki}. We proceed according to the following steps.

{\sc Step A:} we observe that, the equation being linear, establishing that the distributional solution of~\eqref{e:clpb} is unique amounts to show that, if $\bar u \equiv 0$ and $\bar g \equiv 0$, then any distributional solution satisfies $u \equiv 0$. Also, note that in the remaining part of the proof we use~\cite[Lemma 1.3.3]{Dafermos} and we identify $u^2$ with its representative satisfying that the map $t \mapsto u^2(t, \cdot)$ is continuous in $L^{\infty}_{\loc} (\Omega)$ endowed with the weak$^{\ast}$ topology. 

{\sc Step B:} we fix $\bar t \in ]0, T[$ and we construct a sequence of test functions $\varphi_n$ as follows. First, we choose a function $h: [0, + \infty[ \to \R$ such that
\begin{equation}
\label{e:nu}
       h \in \mathcal C^{\infty}_c ([0, + \infty[),      
         \quad h \ge 0 \; \text{and} \; h' \leq 0 \; \text{everywhere in $[0, + \infty[$.}    
\end{equation} 
Next, we set 
\begin{equation}
\label{e:nu1}
   \nu(t, x) : = h \big( \| b \|_{L^{\infty} } |t- \bar t |  + | x | \big) 
\end{equation}
and we observe that $\nu$ satisfies 
\begin{equation}
\label{e:nu2}
    \partial_t \nu + b \cdot \nabla \nu \leq 
    \partial_t \nu + \| b \|_{L^{\infty}} |\nabla \nu| \leq 0 \quad 
    \text{$\Leb^{d+1}$-a.e. $(t, x) \in ]- \infty, \bar t[ \times \R^d$.}
\end{equation}
We then choose a sequence of cut-off functions $\chi_n \in \mathcal C^{\infty}_c ([0, + \infty[)$ satisfying 
\begin{equation}
\label{e:chi}
   \chi_n \equiv 1 \; \text{on $[0, \bar t \, ]$}, \; \;
   \chi_n \equiv 0 \; \text{on $[ \, \bar t + 1/n, + \infty[$}, \; 
     \;
   \chi_n' \leq 0 \; \text{everywhere on $[0, + \infty[$}.
\end{equation}
Finally, we set 
$$
    \varphi_n (t, x) : = \chi_n (t) \nu(t, x) \quad (t, x) \in [0, T[ \times \R^d
$$
and we observe that $\varphi_n \ge 0$ everywhere on $[0, + \infty[ \times \R^d$ and that $\varphi_n$ is compactly supported in~$[0, T[ \times \R^d$ provided that $n$ is sufficiently large. 

{\sc Step C:} we use $\varphi_n$ as a test function in~\eqref{e:trbu2}. First, we observe that by recalling that $\bar g \equiv 0$ and $\bar u \equiv 0$ and by using the renormalization property~\eqref{e:trn} we obtain that the left hand side of~\eqref{e:trbu2} is nonnegative, namely 
\begin{equation*}
\begin{split}
 0 & \leq 
 \int_0^T \! \! \! \int_\Omega u^2 \nu \chi_n' dx dt + 
  \int_0^T \! \! \! \int_\Omega \chi_n u^2 \big( \partial_t \nu + b \cdot  
     \nabla \nu \big) dx dt   - 
     \int_0^T\! \! \! \int_\Omega \div b \, u^2 \, \nu \chi_n  \, dx dt \,. \\
\end{split}
\end{equation*}
Next, we let $n \to + \infty$ and by recalling properties~\eqref{e:nu2} and~\eqref{e:chi} we obtain
$$
    \int_\Omega 
    \nu(\bar t, \cdot) \, u^2 (\bar t, \cdot) dx \leq \| \div b \|_{L^{\infty}} 
    \int_0^{\bar t} \int_\Omega \nu \, u^2 dx dt. 
$$
We can finally conclude by using Gronwall Lemma and the arbitrariness of the function $h$ in~\eqref{e:nu1}. This concludes the proof of Theorem~\ref{t:wp}. \hfill \qed

\subsection{Rigorous statement and proof of the space continuity property}
\label{ss:sc}
We provide a rigorous formulation of the analogue of the space continuity property stablished in the Sobolev case by Boyer in~\cite[\S~7.1]{Boyer}.
\begin{propos}
\label{p:sc}
Let $b$ be as in the statement of Theorem~\ref{t:wp}, 
$u~\in~L^{\infty} (]0, T[ \times \Omega)$ be the distributional solution of~\eqref{e:clpb} and $B \in \mathcal M_{\infty} (\R^{d+1}) $ be the same vector field as in~\eqref{e:B}.  Given a family of graphs $\{ \Sigma_r \}_{r \in I} \subseteq \R^d$ as in Definition~\ref{d:fg}, we fix $r_0 \in I$ and we define the functions $\gamma_0, \gamma_r: ]0,T[ \times D 
\to \R$ by setting 
$$
    \gamma_0 (t,x_1, \dots, x_{d-1}): = \tr^- (uB, ]0, T[ \times \Sigma_{r_0}) 
    \big(t,x_1, \dots, x_{d-1}, f(x_1, \dots, x_{d-1})-r_0 \big)
$$
and 
$$
    \gamma_r (t,x_1, \dots, x_{d-1}): = \tr^+ (uB, ]0, T[ \times \Sigma_{r}) 
    \big(t,x_1, \dots, x_{d-1}, f(x_1, \dots, x_{d-1})-r\big).
$$
Then  
\begin{equation}
\label{e:sc}
     \gamma_r  \to \gamma_0 
     \quad 
     \text{strongly in $L^1 (]0, T[ \times D )$ as $r \to r_0^+$.}
\end{equation}
\begin{proof}
The argument is organized in three steps. 

{\sc Step 1:} we make some preliminary considerations and introduce some notation. With a slight abuse of notation, 
we consider $b$ as a vector field defined on $\R^{d+1}$, set equal to zero out of $]0, T[ \times \Omega$.

By combining hypothesis 3 in the statement of Theorem~\ref{t:wp} with~\cite[Theorem 3.84]{AmbrosioFuscoPallara} we obtain that $b (t, \cdot) \in BV_\loc (\R^d)$ for $\Leb^1$-a.e. $t \in \R$. Hence, the classical theory of $BV$ functions (see for instance~\cite[Section 3.7]{AmbrosioFuscoPallara}) ensures that the outer and inner traces $b(t, \cdot)^+_{\Sigma_r}$ and $b(t, \cdot)^-_{\Sigma_r}$ are well-defined, vector valued functions for $\Leb^1$-a.e. $t \in \R$ and for every $r$. 

{\sc Step 2:} given $B$ as in \eqref{e:B}, we define the functions $\beta_0, \beta_r: ]0,T[ \times D \to \R$ by setting 
$$
    \beta_0 (t, x_1, \dots, x_{d-1}): = \tr^- (B, ]0, T[ \times \Sigma_{r_0}) 
    \big(t, x_1, \dots, x_{d-1}, f(x_1, \dots, x_{d-1})-r_0 \big)
$$
and 
$$
    \beta_r (t, x_1, \dots, x_{d-1}): = \tr^+ (B, ]0, T[ \times \Sigma_{r}) 
    \big(t, x_1, \dots, x_{d-1}, f(x_1, \dots, x_{d-1})-r\big).
$$
We claim that 
\begin{equation}
\label{e:beta}
\beta_r  \to \beta_0 
     \quad 
     \text{strongly in $L^1 (]0, T[ \times D )$ as $r \to r_0^+$.}
\end{equation}
To establish~\eqref{e:beta}, we first observe that by using~\cite[Theorem 3.88]{AmbrosioFuscoPallara} and an approximation argument one can show that for every 
$r \in I$ we have
$$
    \beta_r =  
    b^+_{\Sigma_r} \cdot  \vec m, 
    \quad \text{and} \quad
    \beta_0 = b^-_{\Sigma_0} \cdot \vec m \quad 
    \text{for $\Leb^d$-a.e. 
     $(t, x) \in ]0, T[ \times D$}.
$$   
In the previous expression, $\vec m = (- \nabla f, 1)/ \sqrt{1 + |\nabla f|^2}$ is the unit normal vector defining the orientation of $\Sigma_r$. Also, by again combining~\cite[Theorem 3.88]{AmbrosioFuscoPallara} with an approximation argument we get that 
$$
    \int_0^T \int_D |\beta_r - \beta_0 | dx_1 \dots d_{x_{d-1}} dt \leq 
    \int_0^T |D b (t, \cdot) | ( S ) dt,  
$$ 
which implies~\eqref{e:beta}. In the previous expression, $|Db(t, \cdot)|$ denotes the total variation of the distributional derivative of $b(t, \cdot)$, and $S$ is the set
$$
\begin{aligned}
S:= \Big\{ (x_1, \ldots , x_{d-1}, x_d) \, : \,
&(x_1, \ldots , x_{d-1}) \in D \; \text{ and } \; \\
&f(x_1, \ldots , x_{d-1}) - r < x_d < f(x_1, \ldots , x_{d-1}) - r_0 \Big\} \,.
\end{aligned}
$$

{\sc Step 3:} we conclude the proof of Proposition~\ref{p:sc}. 
First, we observe that due to Theorem~\ref{t:wc} we have that
\begin{equation}
\label{e:wc2}
     \gamma_r \rightharpoonup \gamma_0 \text{ weakly in $L^2 (]0, T[ \times D )$ as $r \to r_0^+$.}    
\end{equation}
Next, we recall that $\gamma_r$ is the normal trace of $u B$ and that $\beta_r$ is the trace of $B$, so that by applying~\cite[Theorem 4.2]{AmbrosioCrippaManiglia} we get 
\begin{equation}
\label{e:square}
    \gamma^2_r  = \beta_r \tr^+ (u^2 B, ]0, T[ \times \Sigma_{r}) 
    \quad \text{and} \quad 
    \gamma^2_0  = \beta_0 \tr^- (u^2 B, ]0, T[ \times \Sigma_{r_0}). 
\end{equation}
By combining~\eqref{e:beta} with the uniform bound $\| \beta_r\|_{L^\infty} \leq \| b \|_{L^\infty}$ we infer that $\beta_r \to \beta_0$ strongly in $L^2 (]0, T[ \times D)$. Then we apply Theorem~\ref{t:wc} to $\tr^+ (u^2 B, ]0, T[ \times \Sigma_{r})$ and hence by recalling~\eqref{e:square} we conclude that
\begin{equation}
\label{e:wc3}
    \gamma^2_r \rightharpoonup \gamma^2_0 \text{ weakly in $L^2 (]0, T[ \times D )$ as $r \to r_0^+$.}  
\end{equation}
By using~\eqref{e:wc2}, we get that~\eqref{e:wc3} implies that $\gamma_r \to \gamma_0$ strongly in $L^2 (]0, T[ \times D )$ and from this we eventually get~\eqref{e:sc}. 
\end{proof}
\end{propos}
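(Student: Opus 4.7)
The plan is to upgrade the weak$^\ast$ convergence of normal traces guaranteed by Theorem~\ref{t:wc} to strong $L^1$ convergence, by controlling the $L^2$ norms via a renormalization / trace chain rule argument. Since $u$ is a distributional solution of~\eqref{e:clpb}, Lemma~\ref{l:wt} yields $uB \in \M_\infty(\R^{d+1})$, and the renormalization argument from Step~2 of \S~\ref{sss:ren} gives that $u^2 B \in \M_\infty(\R^{d+1})$ as well. Consequently, Theorem~\ref{t:wc} applies both to $uB$ and to $u^2 B$, yielding weak$^\ast$ (and hence weak $L^2$, since everything is uniformly bounded on $]0,T[ \times D$) convergence of the corresponding traces as $r \to r_0^+$. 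In particular $\gamma_r \rightharpoonup \gamma_0$ weakly in $L^2(]0,T[ \times D)$, and it suffices to prove $\|\gamma_r\|_{L^2} \to \|\gamma_0\|_{L^2}$.

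The next step is to introduce the traces $\beta_r,\beta_0$ of the bare field $B$ on the same family of graphs and to prove that $\beta_r \to \beta_0$ \emph{strongly} in $L^1(]0,T[ \times D)$. This is where hypothesis~3 of Theorem~\ref{t:wp} becomes indispensable: for $\Leb^1$-a.e.~$t$ the field $b(t,\cdot)$ is $BV_\loc$, so via~\cite[Thm.~3.88]{AmbrosioFuscoPallara} the traces $\beta_r(t,\cdot)$ can be identified with the inner/outer $BV$ traces of $b(t,\cdot)$ on $\Sigma_r$, and for each such $t$ the $L^1$ difference of the traces is bounded by the total variation of $b(t,\cdot)$ on the ``slab'' enclosed between $\Sigma_{r_0}$ and $\Sigma_r$. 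Integrating in time and invoking dominated convergence, together with the $L^1_t\,BV_x$ hypothesis and the fact that this slab has vanishing Lebesgue measure as $r \to r_0^+$, yields the strong $L^1$ convergence; combined with the uniform $L^\infty$ bound $\|\beta_r\|_{L^\infty}\le \|b\|_{L^\infty}$, one also deduces strong $L^2$ convergence.

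Finally, the trace chain rule~\cite[Thm.~4.2]{AmbrosioCrippaManiglia} gives the pointwise identity $\gamma_r^2 = \beta_r\,\tr^+(u^2 B,\,]0,T[\times \Sigma_r)$ and its analogue at $r_0$. Pairing the strong $L^2$ convergence of $\beta_r$ with the weak $L^2$ convergence of $\tr^+(u^2 B,\,]0,T[\times \Sigma_r)$ supplied by Theorem~\ref{t:wc}, one obtains $\gamma_r^2 \rightharpoonup \gamma_0^2$ weakly in $L^2$. Testing this convergence against the constant function $1$ on $]0,T[\times D$ gives $\|\gamma_r\|_{L^2}^2 \to \|\gamma_0\|_{L^2}^2$, which combined with the already established weak $L^2$ convergence of $\gamma_r$ promotes it to strong $L^2$, and a fortiori to strong $L^1$, convergence.

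The main anticipated obstacle is the middle step: extracting \emph{strong} convergence of $\beta_r$ from the $BV$ hypothesis on $b$. The weak$^\ast$ statement of Theorem~\ref{t:wc} is available for any bounded measure-divergence field and by itself is not enough; to pass from weak to strong one must genuinely exploit $BV$ information across the moving family of surfaces, and some care is required to reconcile the sharp spatial $BV$ control with the merely $L^1$ time integrability of $t\mapsto |Db(t,\cdot)|$.
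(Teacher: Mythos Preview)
Your proposal is correct and follows essentially the same approach as the paper's proof: weak convergence of $\gamma_r$ via Theorem~\ref{t:wc}, strong $L^1$ (hence $L^2$) convergence of the traces $\beta_r$ of $B$ via the $BV$ slab estimate from~\cite[Thm.~3.88]{AmbrosioFuscoPallara}, the trace chain rule~\cite[Thm.~4.2]{AmbrosioCrippaManiglia} to write $\gamma_r^2 = \beta_r\,\tr^+(u^2B,\cdot)$, and then the standard ``weak convergence plus convergence of norms implies strong convergence'' argument. The paper's proof is organized into the same three steps with the same ingredients; your write-up is in fact slightly more explicit about why $u^2B\in\M_\infty(\R^{d+1})$ (via the renormalization in \S\ref{sss:ren}) and about how the norm convergence is extracted by testing $\gamma_r^2\rightharpoonup\gamma_0^2$ against the constant~$1$.
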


\section{Counter-examples}
\label{s:cex}
\subsection{Some notation and a preliminary result}
\label{ss:cex:not}
For the reader's convenience, we collect here some notation
we use in this section.
\begin{itemize}
\item Throughout all \S~\ref{s:cex}, $\Omega$ denotes the set 
$]0, + \infty[ \times \R^2$. 
\item We use the notation $(r, y) \in ]0, + \infty[ \times \R^2$ or, if needed, the notation $(r, y_1, y_2)~\in~]0, + \infty[ ~\times ~\R ~\times~\R$
to denote points in $\Omega$.  
\item $\mathrm{div}$ denotes the divergence computed with respect to the $(r, y)$-variable.
\item $\mathrm{Div}$ denotes the divergence computed with respect to the $(t, r, y)$-variable.
\item $\divy$ denotes the divergence computed with respect to the $y$ variable only. 
\item We decompose $]0, 1[ \times \Omega$ as 
$]0, 1[ \times \Omega = \Lambda^+ \cup \Lambda^- \cup \mathcal S$, where 
\begin{equation}
\label{e:lambdap}
   \Lambda^+ : = \{ (t, r, y) \in ]0, 1[ \times \Omega: \; r > t  \}
\end{equation}
and 
\begin{equation}
\label{e:lambdam}
   \Lambda^- : = \{ (t, r, y) \in ]0, 1[ \times \Omega: \; r < t  \},
\end{equation}   
while $\mathcal S$ is the surface 
\begin{equation}
\label{e:sigma}
         \mathcal S : = 
         \{ (t, r, y) \in ]0, 1[ \times \Omega: \; r = t  \}. 
\end{equation}
\end{itemize}
We also observe that, thanks to~\cite[Lemma 1.3.3]{Dafermos}, up to a redefinition of $u(t,x)$ in a negligible set of times, we can assume that the map $t \mapsto u(t, \cdot)$ is continuous from $]0, 1[$ in $L^{\infty} (\Omega)$ endowed with the weak-$\ast$ topology, and in particular 
$$
    u(t, \cdot) \weaks u_0 \quad \text{in $L^{\infty} (\Omega)$
    as $t \to 0^+$,} 
$$
where $u_0$ the value attained by $u$ at $t=0$, as in Lemma~\ref{l:wt}. 
%
%
\subsection{Proof of Theorem~\ref{t:cex}}
\label{s:proof1} 
The proof is organized in three steps. 

{\sc Step 1:} we recall an intriguing example due to Depauw~\cite{Depauw} which is pivotal to our construction. In~\cite{Depauw}, Depauw explicitly exhibits a vector field $c: ]0, + \infty[ \times \R^2 \to \R^2$ satisfying the following properties:
\begin{itemize}
\item[a)] $c \in L^{\infty} (]0, + \infty[ \times \R^2; \R^2)$.  
\item[b)] For every $r>0$, $c(r, \cdot )$ is piecewise smooth and, for almost every $y \in \mathbb R^2$, the characteristic curve trough $y$ is well defined.
\item[c)] $ \divy c \equiv 0$. 
\item[d)] $c \in  L^1_\mathrm{loc} \big(       ]0, 1 [ ; \bvloc( \R^2; \mathbb R^2) \big)$, but $c \notin  L^1 \big(
       [0, 1 [ ; \bvloc( \R^2; \mathbb R^2) \big)$. Namely, the $BV$ regularity deteriorates as $r \to 0^+$. 
\item[e)] The Cauchy problem
\begin{equation}
\label{e:depauw}
    \left\{
        \begin{array}{lll}
              \partial_r w+ \divy (c w ) = 0 & \text{on  $]0, 1[ \times \R^2$} \\
               w = 0 & \text{at $r=0$} \phantom{\int} \\
        \end{array}
       \right.  
\end{equation}
admits a nontrivial bounded solution, which in the following we denote by $v(r, y)$.
\end{itemize}
{\sc Step 2:} we exhibit a vector field $b$ satisfying properties i), $\dots$, v) in the statement of Theorem~\ref{t:cex2}. We recall that the sets $\Lambda^+$, $\Lambda^-$ and $\mathcal S$ are defined by~\eqref{e:lambdap},~\eqref{e:lambdam} and~\eqref{e:sigma}, respectively. We define the vector field $b: ]0, 1[ \times \Omega \to \R^3$ by setting 
\begin{equation}
\label{e:b}
         b(t, r, y) : = 
         \left\{
          \begin{array}{ll}
                     \displaystyle{\big(1, c(r, y) \big) } & \text{in $\Lambda^-$}  \\
                       \displaystyle{\big(1, 0 \big) } & \text{in $\Lambda^+$ }   \\
          \end{array} 
         \right.
\end{equation}
In the previous expression, $c$ is Depauw's vector field as in {\sc Step 1}. By relying on properties a), c) and d) in {\sc Step 1} one can show that $b$ satisfies properties i), ii), iii) in the statement of Theorem~\ref{t:cex}. 

Next, we recall that the initial-boundary value problem~\eqref{e:cex:ibvp} admits the trivial solution $u \equiv 0$ and that the linear combination of solutions is again a solution. Hence, establishing property v) in the statement of Theorem~\ref{t:cex}
amounts to exhibit a nontrivial solution of~\eqref{e:cex:ibvp}. 
We define the function $u$ by setting 
\begin{equation}
\label{e:u2}
   u(t, r, y) : = 
   \left\{
   \begin{array}{ll} v(r, y)  & \text{in $\Lambda^-$} \\
    0  & \text{in $\Lambda^+$, }  \\
  \end{array} 
   \right.
\end{equation}
where $v$ is the same function as in {\sc Step 1}. 

{\sc Step 3:} we show that the function $u$ is a distributional solution of~\eqref{e:cex:ibvp}. We set $C:=(u, bu)$ and we observe that by construction $\divtx C \equiv 0$ on $\Lambda^+$. Also, property e) in {\sc Step 1} implies that $\divtx C \equiv 0$ on $\Lambda^-$. Finally, by recalling~\eqref{divergence_on_boundary} we infer that $\divtx C \res \mathcal S =0$ since the normal trace is $0$ on both sides. 

We are left to show that the initial and boundary data are attained. First, we observe that $u(t, \cdot) \weaks 0$ as $t \to 0^+$ and hence $u_0 \equiv 0$ by the weak continuity of $u$ with respect to time. Next, we fix an open and bounded set $D \subseteq \R^2$ and we define the family of graphs $\{ \Sigma_r \}_{r \in ]0, 1[} \subseteq ]0, 1[ \times \Omega$  by setting 
\begin{equation}
\label{e:sigmar}
 \Sigma_r : = \big\{ 
    (t, r, y_1, y_2): \; t \in ]0, 1[ \; \text{and} \;  (y_1, y_2) \in D \big\}.
\end{equation}
The orientation is given by the vector $(0, -1, 0, 0)$. We point out that requirement e) in {\sc Step 1} implies that $v(r, \cdot) \weaks 0$ as $r \to 0^+$. Hence, by recalling that $b$ is given by~\eqref{e:b}, we obtain that $\tr^+ (C, \Sigma_r) \weaks 0$ as $r \to 0^+$. By recalling Theorem~\ref{t:wc} and Definition~\ref{d:rf}, we infer that $\tr (bu) \equiv 0$ on $]0, 1[ \times \partial \Omega$. 
This concludes the proof of Theorem~\ref{t:cex}. \hfill \qed

\subsection{Proof of Theorem~\ref{t:cex2}}
The proof is divided in four main steps:
\begin{enumerate}
\item in \S~\ref{sss:betak} we construct the auxiliary vector field $\beta_k$, which will serve as a ``building block" for the construction of the vector field $b$;
\item in \S~\ref{sss:b} we define the vector field $b$;
\item in \S~\ref{sss:reg} we show that $b$ satisfies properties iii) and iv) in the statement of Theorem~\ref{t:cex2};
\item finally, in \S~\ref{sss:nt} we exhibit a non trivial solution of the initial-boundary value problem~\eqref{e:cex:ibvp2}. Since the problem is linear, any linear combination of solutions is also a solution and hence the existence of a nontrivial solution implies the existence of infinitely many different solutions.  
\end{enumerate}
\subsubsection{Construction of the vector field $\beta_k$}
\label{sss:betak}
We fix $k \in \mathbb N$ and we construct the vector field $\beta_k$, which is defined on the cell 
$$
    (r, y_1, y_2) \in \, ]0, 4 \cdot 2^{-k} [ \times ]0, 4 \cdot 2^{-k} [ \times  ]0, 4 \cdot 2^{-k} [. 
$$
We split the $r$-interval $]0, 4 \cdot 2^{-k}[$ into four equal sub-intervals and we proceed according to the following steps. \\
{\sc Step 1:} if $r \in ] 0, 2^{-k} [$, we consider a ``three-colors chessboard" in the $(y_1,y_2)$-variables at scale $2^{-k}$ as in Figure~\ref{f:s1}, left part. The vector field $\beta_k$ attains the values $(1, 0, 0)$, $(-5, 0, 0)$ and $(0, 0, 0)$ on dashed, black and white squares, respectively. Note that $\beta_k$ satisfies 
\begin{equation}
\label{e:dfs1}
    \div \beta_k \equiv 0 \quad \text{on $]0,  2^{-k} [ \times ]0, 4 \cdot 2^{-k} [ \times  ]0, 4 \cdot 2^{-k} [$}
\end{equation}
since $\beta_k$ is piecewise constant and tangent at its discontinuity surfaces. 

Here is the rigorous definition of $\beta_k$: we set 
$$
    D_k : = \bigcup _{n, m=0, 1}
    ]    (2 n) 2^{-k} , (2n+1 )2^{-k}  [ \times  ]    (2 m)2^{-k} , (2m+1 )2^{-k}  [  
$$
and 
\begin{equation}
\label{e:dk}
    B_k : = \bigcup _{n, m=0, 1}
    ]    (2 n+1) 2^{-k} , (2n+2 )2^{-k}  [ \times  ]    (2 m+1)2^{-k} , (2m+2 )2^{-k}  [.
\end{equation}
Note that $D_k$ and $B_k$ are represented in the left part of Figure~\ref{f:s1} by dashed and black regions, respectively. 
Next, we define 
\begin{equation}
\label{e:betak1}
    \beta_k(r, y_1, y_2):=
    \left\{
    \begin{array}{lll}
    (1, 0, 0) & 
    \text{if $(y_1, y_2) \in D_k$} \\    
    \phantom{\int} \\
    (-5, 0, 0) & 
    \text{if $(y_1, y_2) \in B_k$} \\    
    \phantom{\int}  \\
     (0, 0, 0) & \text{elsewhere on $\big] 0, 4 \cdot 2^{-k} \big[  
    \times \big] 0, 4 \cdot 2^{-k} \big[   $} \\
     \end{array}
    \right.
\end{equation}
\begin{figure}
\begin{center}
\caption{The vector field $\beta_k(r, y_1, y_2)$ for different values of $r$: the dashed, black and white  squares are the region where $\beta_k$ attains the values $(1, 0, 0)$,  $(-5, 0, 0)$ and $(0, 0, 0)$, respectively.} 
\psfrag{a}{\small{$y_2$}} \psfrag{b}{\small{$y_1$}}
\psfrag{A}{\small{$r \in \, ]0, 2^{-k}[$}} 
\psfrag{B}{\small{$r = 2\cdot 2^{-k}$}} 
\psfrag{C}{\small{$r \in \, ]3 \cdot 2^{-k}, 4 \cdot 2^{-k}[$}} 
\label{f:s1}
\bigskip
\includegraphics[scale=0.4]{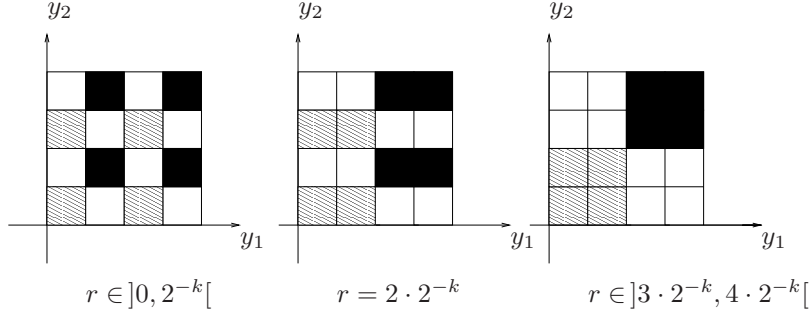}
\end{center}
\end{figure}
{\sc Step 2:} if $r \in ]  2^{-k}, 2 \cdot 2^{-k} [$, then the heuristic idea to define $\beta_k$ is that we want to (i) horizontally leftward slide the rightmost dashed squares and (ii) horizontally rightward slide the leftmost black squares. The final goal is that at $r = 2 \cdot 2^{-k}$ we have reached the configuration of the vector field described in Figure~\ref{f:s1}, center part.  The nontrivial issue is that we also require that 
\begin{equation}
\label{e:dfs2}
    \div \beta_k \equiv 0 \quad \text{on $]  0, 2 \cdot 2^{-k} [ \times ]0, 4 \cdot 2^{-k} [ \times  ]0, 4 \cdot 2^{-k} [$}. 
\end{equation}
To achieve~\eqref{e:dfs2}, we employ the construction illustrated in Figure~\ref{f:s2}: the vector field $\beta_k$ attains the value $(1, 0, 0)$ on the horizontal part of the dashed region, the value $(1, -1, 0)$ on the inclined part of the dashed region and the value $(0, 0, 0)$ elsewhere. Note that~\eqref{e:dfs2} is satisfied because $\beta_k$ is piecewise constant and it is tangent at its discontinuity surfaces on the interval $r \in ]   2^{-k}, 2 \cdot 2^{-k} [ $. We conclude by recalling~\eqref{e:dfs1} and by observing that the normal trace is continuous at the discontinuity surface $r = 2^{-k}$ and hence no divergence is created there. 

Here is the rigorous definition of $\beta_k$ for $r \in ]   2^{-k}, 2 \cdot 2^{-k} [$:
\begin{equation}
\label{e:betak2}
    \beta_k(r, y_1, y_2):=
    \left\{
    \begin{array}{lll}
    (1, 0, 0) & 
    \text{if $(y_1, y_2) \in ]0, 2^{-k}[ \times ]0, 2^{-k}[$} \\    
    \phantom{\int} & \text{or $(y_1, y_2) \in ]0, 2^{-k}[ \times ]2 \cdot 2^{-k}, 
    3 \cdot 2^{-k}[ $ }\\
     \phantom{\int} \\
    (1, -1, 0) & 
    \text{if $ - r + 3 \cdot 2^{-k} < y_1 <  - r + 4 \cdot 2^{-k}$} \\   
    \phantom{\int} & \text{and $y_2 \in ]0,  2^{-k}[$ or 
    $y_2 \in ]2 \cdot 2^{-k}, 
    3 \cdot 2^{-k}[$ }\\
     \phantom{\int} \\ 
        (-5, 0, 0) & 
    \text{if $(y_1, y_2) \in ]3 \cdot 2^{-k}, 4 \cdot 2^{-k}[ \times ] 2^{-k}, 2 \cdot 2^{-k}[$} \\    
    \phantom{\int} & \text{or $(y_1, y_2) \in  ]3 \cdot 2^{-k}, 4 \cdot 2^{-k}[
     \times ]3 \cdot 2^{-k}, 4 \cdot 2^{-k}[$ } \\   
     \phantom{\int} \\ 
     (-5, -5, 0) & 
    \text{if $  r   < y_1 <   r +   2^{-k}$} \\   
    \phantom{\int} & \text{and $y_2 \in ] 2^{-k}, 2 \cdot 2^{-k}[$ or 
    $y_2 \in ]3 \cdot 2^{-k}, 4 \cdot 2^{-k}[ $}\\
    \phantom{\int}  \\
     (0, 0, 0) & \text{elsewhere on $\big] 0, 4 \cdot 2^{-k} \big[  
    \times \big] 0, 4 \cdot 2^{-k} \big[   $} \\
     \end{array}
    \right.
\end{equation}
\begin{figure}
\begin{center}
\caption{The vector field $\beta_k(r, y_1, y_2)$ for $y_2 \in ]0, 2^{-k}[$ and ${y_2 \in] 2 \cdot 2^{-k}, 3 \cdot 2^{-k}[}$. The field $\beta_k$ attains the value $(1, -1, 0)$ in the inclined dashed region.} 
\psfrag{a}{\small{$r = 2 \cdot 2^{-k}$}} \psfrag{r}{\small{$r$}}
\psfrag{y2}{\small{$y_1$}} 
\label{f:s2}
\bigskip
\includegraphics[scale=0.3]{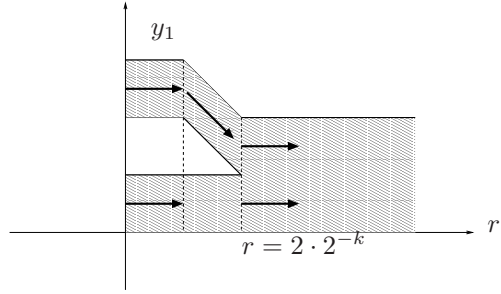}
\end{center}
\end{figure}
{\sc Step 3:} if $r \in ]  2 \cdot 2^{-k}, 3 \cdot 2^{-k} [$, the heuristic idea is defining $\beta_k$ in such a way that (i) we  push up the lower black region in Figure~\ref{f:s1}, central part, (ii) we pull down the upper dashed region in Figure~\ref{f:s1}, central part and (iii) we satisfy the requirement that $\beta_k$ is divergence-free. This is done by basically using the same construction as in {\sc Step 2}. Note that at $r = 3 \cdot 2^{-k}$ we have reached the configuration described in Figure~\ref{f:s1}, right part. 

Here is the rigorous definition of $\beta_k$ for $r \in ]2 \cdot 2^{-k}, 3 \cdot 2^{-k}[$:
$$
    \beta_k(r, y_1, y_2):=
    \left\{
    \begin{array}{lll}
    (1, 0, 0) & 
    \text{if $(y_1, y_2) \in ]0, 2 \cdot 2^{-k}[ \times ]0, 2^{-k}[$} \\    
     \phantom{\int} \\
    (1, 0, -1) & 
    \text{if $y_1 \in ]0, 2 \cdot  2^{-k}[$} \\
    \phantom{\int} & \text{and  $ - r + 4 \cdot 2^{-k} < y_2 <  - r + 5 \cdot 2^{-k}$} \\ 
        \phantom{\int} \\ 
        (-5, 0, 0) & 
    \text{if $(y_1, y_2) \in 
    ]2 \cdot 2^{-k}, 4 \cdot 2^{-k}[ \times  ]3 \cdot 2^{-k}, 4 \cdot 2^{-k}[$ } \\   
     \phantom{\int} \\ 
     (-5, 0, -5) & 
    \text{if $  y_1 \in ]2 \cdot 2^{-k}, 4 \cdot 2^{-k}[ $} \\   
    \phantom{\int} & \text{and $ r - 2^{-k} < y_2 < r $}\\
    \phantom{\int}  \\
     (0, 0, 0) & \text{elsewhere on $\big] 0, 4 \cdot 2^{-k} \big[  
    \times \big] 0, 4 \cdot 2^{-k} \big[   $} \\
     \end{array}
    \right.
$$
{\sc Step 4:} if $r \in ]  3 \cdot 2^{-k}, 4 \cdot 2^{-k} [$, then we consider the ``three colors chessboard" in the $(y_1,y_2)$-variables at scale $2 \cdot 2^{-k}$ illustrated in Figure~\ref{f:s1}, right part. The vector field $\beta_k$ attains the value $(1, 0, 0)$, $(-5, 0, 0)$ and $(0, 0, 0)$ on dashed, black and white regions, respectively. 

Here is the rigorous definition of $\beta_k$ for $r \in ]3 \cdot 2^{-k}, 4 \cdot 2^{-k}[$: 
$$
    \beta_k(r, y_1, y_2):=
    \left\{
    \begin{array}{lll}
    (1, 0, 0) & 
    \text{if $(y_1, y_2) \in ]0, 2 \cdot 2^{-k}[ \times ]0, 2 \cdot 2^{-k}[$} \\    
     \phantom{\int} \\
        (-5, 0, 0) & 
    \text{if $(y_1, y_2) \in 
    ]2 \cdot 2^{-k}, 4 \cdot 2^{-k}[ \times  ]2 \cdot 2^{-k}, 4 \cdot 2^{-k}[$ } \\   
    \phantom{\int}  \\
     (0, 0, 0) & \text{elsewhere on $\big] 0, 4 \cdot 2^{-k} \big[  
    \times \big] 0, 4 \cdot 2^{-k} \big[   $} \\
     \end{array}
    \right.
$$
Note that by construction
\begin{equation}
\label{e:dfs4}
          \div \beta_k \equiv 0 \quad \text{on $]0,  4 \cdot  2^{-k} [ \times ]0, 4 \cdot 2^{-k} [ \times  ]0, 4 \cdot 2^{-k} [$}.
\end{equation}
\subsubsection{Construction of the vector field $b$}
\label{sss:b}
We now define the vector field $b$ by using as a ``building block" the vector field $\beta_k$ defined in \S~\ref{sss:betak}.  We proceed in three steps. \\
{\sc Step A:} we extend $\beta_k$ to $]0, 2^{2-k}[ \times \R^2$ by imposing that it is $2^{2-k}$-periodic in both $y_1$ and $y_2$, namely we set
\begin{equation}
\label{e:per}
    \beta_k (r, y_1 + m 2^{2-k}, y_2 + n 2^{2-k}  ): = \beta_k (r, y_1, y_2)
\end{equation}
for every $m, n \in \mathbb Z$
and 
    $(y_1, y_2) \in \big] 0,  2^{2-k} \big[  
    \times \big] 0, 2^{2-k} \big[$. 
We recall~\eqref{e:dfs4} and we observe that $\beta_k$ is tangent at the surfaces $y_1= m 2^{2-k}$ and $y_2 = n 2^{2-k}$, $m, n \in \mathbb Z$. We therefore get 
\begin{equation}
\label{e:dfsa}
          \div \beta_k \equiv 0 \quad \text{on $]0,    2^{2-k} [ \times \R^2$}.
\end{equation}
{\sc Step B:} we define the vector field $b (t, r, y_1, y_2)$ on the set $\Lambda^-$ defined by~\eqref{e:lambdam}. To this end, we introduce the decomposition 
\begin{equation}
\label{e:dec}
   ]0, 1[: = \mathcal N \cup \bigcup_{k =3}^{\infty}  I_k ,
\end{equation}
where $\mathcal N$ is an $\Leb^1$-negligible set and
$$
    I_k : = 
    \left\{
    \begin{array}{lll}
            ]1/2, 1 [  & \text{if $k =3$}  \\
            \phantom{\int} \\
          \displaystyle{  \Big]  1- \sum_{j=3}^{k} 2^{2-j}, 1 - \sum_{j=3}^{k-1} 2^{2-j} 
          \Big[ }
           & \text{if  $k \ge 4$}. \\
   \end{array}
   \right.
$$   
We then set 
\begin{equation}
\label{e:brt}
         b(t, r, y_1, y_2) : = 
         \beta_k (r + 1- \sum_{j=3}^{k} 2^{2-j}, y_1, y_2) \quad \text{in $\Lambda^-$, when $r \in I_k$.} 
\end{equation}
Some remarks are here in order. First, to illustrate the heuristic idea underlying definition~\eqref{e:brt} we focus on the behavior at time $t=1$. The vector field $b(1, \cdot)$ behaves like $\beta_3$ on the interval $r \in ]1/2, 1[$, like $\beta_4$ on the interval $r \in ]1/4, 1/2[$, like $\beta_5$ on the interval $r \in ]1/8, 1/4[$, and so on. In other words, as $r \to 0^+$ the $r$-component of vector field $b$ oscillates between the values $1$, $-5$ and $0$ on a finer and finer ``three-colors chessboard". 

Second, we recall~\eqref{e:dfsa} and we observe that the vector field is continuous at the surfaces $r = 1 - \sum_{j=3}^{k-1} 2^{2-j}$, $k \ge 4$. Hence, 
\begin{equation}
\label{e:dfsb}
         \div b \equiv 0 \quad \text{on $\Lambda^-$}
          \end{equation} 
where the set $\Lambda^-$ is the same as in~\eqref{e:lambdam}. 

{\sc Step C:}  we define the vector field $b (t, r, y_1, y_2)$ on the set $\Lambda^+$ defined by~\eqref{e:lambdap}. To this end, we use the same decomposition of the unit interval as in~\eqref{e:dec} and we set
\begin{equation}
\label{e:btr}
     b(t, r, y_1, y_2) : = (1, 0, 0) \mathbf{1}_{D(t)} +
     (-5, 0, 0) \mathbf{1}_{B(t)} \quad \text{in $\Lambda^+$, when $t \in I_k$}.
\end{equation}
In the previous expression, $\mathbf{1}_{D(t)}$ denotes the characteristic function of the set 
$$
    D(t)  : = \Big\{ (r, y_1, y_2): \beta_k (t + 1- \sum_{j=3}^{k} 2^{2-j}, y_1, y_2) \cdot (1, 0, 0) =1     \Big\}
$$
and $\mathbf{1}_{B(t)}$ is the characteristic function of the set 
$$
    B(t)  : = \Big\{ (r, y_1, y_2): 
    \beta_k (t +  1- \sum_{j=3}^{k} 2^{2-j}, y_1, y_2) \cdot (1, 0, 0) =-5    \Big\}
$$
Hence, we have  
$
         \div b \equiv 0$ on $\Lambda^+$
since $b$ is piecewise constant and tangent at the discontinuity surfaces.
Next, we observe that by construction for any $t \in ]0,1[$ the normal trace of $b$ is continuous at the surface $\{ (r, y_1, y_2) : \; r=t \}\subseteq \Omega$.  By recalling~\eqref{e:dfsb}, we arrive at 
\begin{equation}
\label{e:dfsc}
        \div b \equiv 0 \quad \text{on $]0, 1[ \times \Omega$}. 
\end{equation}
\subsubsection{Proof of the regularity and trace properties} 
\label{sss:reg}
We first observe that, for every open and bounded set $\Omega_\ast$ such that $\bar \Omega_\ast \subseteq \Omega$, we have $b \in L^1 ( [0, T[; BV (\Omega_\ast ; \R^3))$ since it is piecewise constant on $\Omega_\ast$. However, note that the $BV$ regularity degenerates at the boundary $r=0$. 

Next, we prove that $\tr b \equiv 1$: we use the family of graphs $\{ \Sigma_r \}_{r \in ]0, 1[}$ defined as in~\eqref{e:sigmar}. By relying on Theorem~\ref{t:wc}, we infer that proving that $\tr b \equiv 1$ amounts to show that there is a sequence $r_k \to 0^+$ such that
$
   \alpha_{r_k} \weaks 1    
$
weakly$^{\ast}$ in $L^{\infty} (]0, 1[ \times D, \Leb^3 \res ]0, 1 [ \times D)$ as $k \to + \infty$, for every $D$ open and bounded in~$\R^2$. Hence, we can conclude by choosing $r_k: = 2^{2-k}$.  
\subsubsection{Construction of a nontrivial solution of the initial-boundary value problem~\eqref{e:cex:ibvp2}}  
\label{sss:nt}
To exhibit a nontrivial solution of~\eqref{e:cex:ibvp2} we proceed as follows: first, we give the rigorous definition, next we make some heuristic remark and finally we show that $u$ is actually a distributional solution 
of~\eqref{e:cex:ibvp2}. 

We recall the decomposition $]0, 1[\times \Omega = \Lambda^+ \cup \Lambda^- \cup \mathcal S$, 
where $\Lambda^+$, $\Lambda^-$ and $\mathcal S$ are defined by~\eqref{e:lambdap},~\eqref{e:lambdam} and~\eqref{e:sigma}, respectively. We define the function $u: ]0, 1[ \times \Omega \to \R$ by setting 
\begin{equation}
\label{e:vu}
    u (t, r, y_1, y_2) : = 
    \left\{
    \begin{array}{ll}
    1 & (t, r, y_1, y_2) \in \Lambda^- \; \text{and} \; 
    b(t, r, y_1, y_2) \cdot (1, 0,0) =1 \\
    0 & \text{elsewhere in $]0, 1[ \times \Omega$}. 
     \end{array}
     \right.
\end{equation}
The heuristic idea behind this definition is as follows. We have defined the vector field $b$ in such a way that, although $b$ is overall outward pointing (namely, $\tr b > 0$), there are actually countably many regions where $b$ is 
 inward pointing (namely its $r$-component is strictly positive) which accumulate and mix at the domain boundary: these regions are represented by the dashed square in Figure~\ref{f:s1}.  The function $u$ is defined in such a way that $u$ is transported along the characteristics (which are well-defined for a.e. $(r, y)$ in the domain interior) and it is nonzero only on the regions where $b$ is inward pointing. As a result, although $b$ is overall outward pointing, it actually carries into the domain the nontrivial function $u$. This behavior is made possible by the breakdown of the $BV$ 
regularity of $b$ at the domain boundary. 

We now show that $u$ is a distributional solution of the initial-boundary value problem~\eqref{e:cex:ibvp2}. 
First, we observe that $u(t, \cdot) \weaks 0$ as $t \to 0^+$ and hence the weak continuity of $u$ with respect to the time implies that the initial datum is satisfied.  

We then set $C:= (u, bu)$ and we observe that 
$\divtx C =0$ on $\Lambda^+$ because $C$ is identically $0$ there. Next, we observe that the vector field $b$ is constant with respect to $t$ in $\Lambda^-$ and, by recalling that $u$ is defined as in~\eqref{e:vu}, we infer that $u$ is also constant with respect to $t$ in $\Lambda^-$. Hence, showing that $\divtx C\equiv 0$ in $\Lambda^-$ amounts to show that 
$
    \div (bu) \equiv 0
$
in $\Lambda^-$. This can be done by relying on the same arguments we used to obtain~\eqref{e:dfsc}. 

Finally, we observe that the normal vector to the surface $\mathcal S$ is (up to an arbitrary choice of the orientation)
$
  \vec n : = (1/ \sqrt{2}, -1/ \sqrt{2}, 0, 0). 
$
Hence, by construction the normal trace of $C$ is zero on both sides of the surface $\mathcal S$ and hence $\divtx C \res \mathcal S =0$.  
This concludes the proof of Theorem~\ref{t:cex2}. \hfill \qed

\subsection{Proof of Corollary~\ref{c:dpt}}
\label{ss:rm} 
We first describe the heuristic idea underlying the construction of the vector field $b$. Loosely speaking, we proceed as in the proof of Theorem~\ref{t:cex2}, but we modify the values of the ``building block" $\beta_k$ on the subinterval $r \in ]0, 2^{-k}[$. Indeed, instead of defining $\beta_k$ as in {\sc Step 1} of \S~\ref{sss:betak}, we introduce nontrivial components in the $(y_1, y_2)$-directions. These non-trivial components are reminiscent of the construction in Depauw~\cite{Depauw} and the resulting vector field can be actually regarded as a localized version of Depauw's vector field. In particular, they enable us to construct a solution that oscillates between $1$, $-1$ and $0$ and undergoes a finer and finer mixing as $r \to 0^+$. 

The technical argument is organized in two steps: in \S~\ref{sss:ldp} we introduce the ``localized version" of Depauw vector field, while in \S~\ref{sss:con} we conclude the proof of Corollary~\ref{c:dpt}. Before proceeding, we introduce the following notation:
\begin{itemize}
\item $Q_k$ is the square $(y_1, y_2) \in ]0, 2^{-k}[ \times ]0, 2^{-k}[$;
\item $S_k$ is the square $(y_1, y_2) \in ]0, 2^{2-k}[ \times ]0, 2^{2-k}[$.
\end{itemize}
\subsubsection{A localized version of Depauw~\cite{Depauw} vector field}
\label{sss:ldp}
We construct the vector field $\alpha_k$, which is defined on the cell ${(r, y_1, y_2) \in ]0, 2^{-k}[ \times Q_k}$. Also, for this construction we regard $r$ as a time-like variable and we describe how a given initial datum evolves under the action of $\alpha_k$.  The argument is divided into steps. 

{\sc Step 1:} we construct the ``building block" $a_k$, which is defined on the square $(y_1, y_2) \in ]- 2^{ -2-k}, 2^{-2-k}[ \times ]- 2^{-2-k}, 2^{-2-k}[$ by setting 
\begin{equation}
\label{e:ak}
       a_k (y_1, y_2) = 
       \left\{
       \begin{array}{ll}
       (0, - 2 y_1) & |y_1| > |y_2|  \\
       (2 y_2, 0)  & |y_1| < |y_2| \,. \\
       \end{array}
       \right.
\end{equation}
Note that $a_k$ takes values in $\R^2$, it is divergence free and it is tangent at the boundary of the square.

{\sc Step 2:} we define the function $\bar z_k: Q_k \to \R$ by considering the chessboard illustrated in Figure~\ref{f:dp4}, left part. The function $\bar z_k$ attains the value $-1$ and $1$ on white and black squares, respectively. 
\begin{figure}
\begin{center} 
\caption{The action of the vector field $\alpha_k (r, \cdot)$ on the solution $z_k$ on the interval $r \in ]0, 2^{-2-k}[$. The solution $z_k$ attains the values $1$ and $-1$ on black and white regions, respectively.} 
\psfrag{a}{\small{$\bar z_k $}}  \psfrag{b}{\small{$\alpha_k(r, \cdot)$}}
\psfrag{c}{\small{$z_k (2^{-2-k}, \cdot)$}}
\psfrag{d}{\small{$2^{-2-k}$}} 
\psfrag{x}{\small{$y_1$}} 
\psfrag{y}{\small{$y_2$}} 
\label{f:dp4}
\bigskip
\includegraphics[scale=0.4]{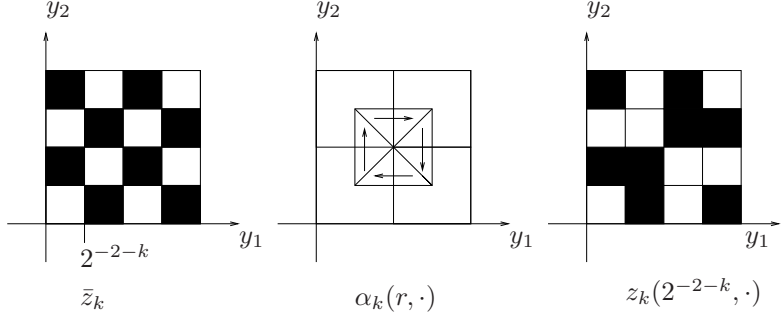}
\end{center}
\end{figure} 

{\sc Step 3:} we begin the construction of the vector field 
$
\alpha_k: ]0, 2^{-k}[  \times Q_k  \to \R^2.
$ 
If $r \in ]0, 2^{-2-k}[$, then $\alpha_k(r, \cdot)$ is defined by setting 
\begin{equation}
\label{e:alphak}
    \alpha_k (r, y_1, y_2) =
    \left\{
    \begin{array}{ll}
               a_k (y_1- 2^{-1-k}, y_2 -2^{-1-k}) \\
                \qquad \text{if $(y_1, y_2) \in ] 2^{-2-k}, 3 \cdot 2^{-2-k}   [ \times ] 2^{-2-k}, 3 \cdot 2^{-2-k}   [$}\\ \phantom{ciao} \\
               (0, 0)  \; \text{elsewhere on $Q_k$}. 
    \end{array}
    \right.
\end{equation}
See Figure~\ref{f:dp4}, central part, for a representation of the values attained by $\alpha_k$ on the interval $r \in ]0, 2^{-2-k}[$. 

We term $z_k$ the solution of the initial-boundary value problem
\begin{equation}
\label{e:zk1}
    \left\{
    \begin{array}{lll}
       \partial_r z_k + \divy ( \alpha_k z_k ) = 0 & 
        \text{in $]0, 2^{-k}[ \times Q_k$ } \\
       z_k = \bar z_k & \text{at $r=0$,} \\ 
    \end{array}
    \right.
\end{equation}
where $\bar z_k$ is defined as in {\sc Step 2}. Note that by construction $\divy \alpha_k =0$ and therefore the first line of~\eqref{e:zk1} is actually a transport equation. Hence, the value attained by the function $z_k$ can be determined by the classical method of characteristics. In particular, the function $z_k (2^{-2-k}, \cdot)$ is represented in Figure~\ref{f:dp4}, right part, and it attains the values $1$ and $-1$ on black and white squares, respectively. 

{\sc Step 4:} if $r \in ]2^{-2-k}, 3 \cdot 2^{-2-k}[$, then $\alpha_k(r, \cdot)$ is defined by setting 
$$
\alpha_k (r, y_1, y_2) =
               a_k (y_1- i 2^{-2-k}, y_2 - j 2^{-2-k}) 
$$               
if 
$$
    (y_1, y_2) \in ](i-1) 2^{-2-k}, (i+1)  \cdot 2^{-2-k}   [ \times ] (j-1) 2^{-2-k}, (j+1) \cdot 2^{-2-k}   [,
$$    
where $i, j$ can be either $1$ or $3$. See Figure~\ref{f:dp3}, central part, for a representation of the values attained by $\alpha_k$ on the interval $r \in  ]2^{-2-k}, 3 \cdot 2^{-2-k}[$. Note that by construction $\divy \alpha_k \equiv 0$ on $]0, 3 \cdot 2^{-2-k}[ \times Q_k$ and hence the solution $z_k$ of~\eqref{e:zk1} evaluated at $r = 3 \cdot 2^{-2-k}$ is as in Figure~\ref{f:dp3}, right part: as usual, the black and white squares represent the regions where $z_k (3 \cdot 2^{-2-k}, \cdot)$ attain the values $1$ and $-1$, respectively.
\begin{figure}
\begin{center} 
\caption{The action of the vector field $\alpha_k (r, \cdot)$ on the solution $z_k$ on the interval $r \in ]2^{-2-k}, 3 \cdot 2^{-2-k}[$. The solution $z_k$ attains the values $1$ and $-1$ on black and white regions, respectively.} 
\psfrag{a}{\small{$z_k(2^{-2-k}, \cdot) $}}  \psfrag{b}{\small{$\alpha_k(r, \cdot)$}}
\psfrag{c}{\small{$z_k (3 \cdot 2^{-2-k}, \cdot)$}}
\psfrag{x}{\small{$y_1$}} 
\psfrag{y}{\small{$y_2$}} 
\label{f:dp3}
\bigskip
\includegraphics[scale=0.4]{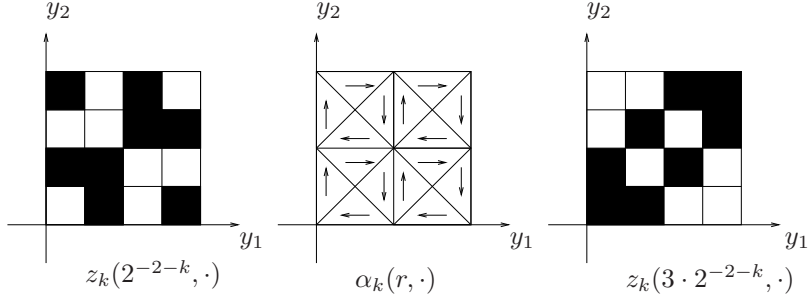}
\end{center}
\end{figure} 

{\sc Step 5:} if $r \in ]3 \cdot 2^{-2-k}, 4 \cdot 2^{-2-k}[$, then $\alpha_k(r, \cdot)$ is  again defined by~\eqref{e:alphak}. Hence, the values attained by $z_k(2^{-k}, \cdot)$ are those represented in Figure~\ref{f:dp1}, right part. 
\begin{figure}
\begin{center} 
\caption{The action of the vector field $\alpha_k (r, \cdot)$ on the solution $z_k$ on the interval $r \in ]3 \cdot 2^{-2-k}, 4 \cdot 2^{-2-k}[$. The solution $z_k$ attains the values $1$ and $-1$ on black and white regions, respectively.} 
\psfrag{a}{\small{$z_k(3 \cdot 2^{-2-k}, \cdot) $}}  \psfrag{b}{\small{$\alpha_k(r, \cdot)$}}
\psfrag{c}{\small{$z_k (4 \cdot 2^{-2-k}, \cdot)$}}
\psfrag{x}{\small{$y_1$}} 
\psfrag{y}{\small{$y_2$}} 
\label{f:dp1}
\bigskip
\includegraphics[scale=0.4]{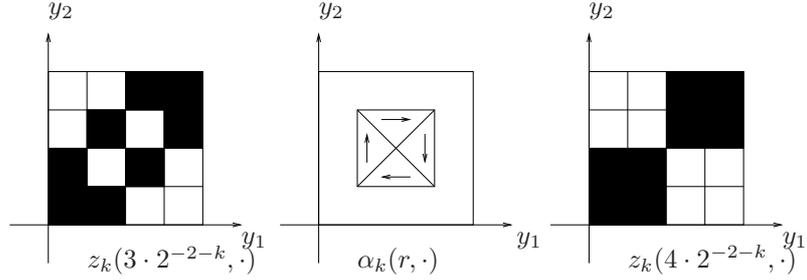}
\end{center}
\end{figure} 
\subsubsection{Conclusion of the proof}
\label{sss:con}
Loosely speaking, the proof of Corollary~\ref{c:dpt} is concluded by combining the construction described in \S~\ref{sss:ldp} with the proof of Theorem~\ref{t:cex2}. The argument is divided in four steps. 

{\sc Step A:} we define the vector field $\tilde \beta_k$ and the solution $u_k$ on $(r, y_1, y_2) \in ]0, 2^{-k}[ \times S_k$, where $S_k$ is the square $ ]0, 2^{2-k}[ \times  ]0, 2^{2-k}[$. 

We recall the definition of $B_k$ provided by~\eqref{e:dk} and we set
\begin{equation*}
\tilde \beta_k (r, y_1, y_2): =
\left\{
 \begin{array}{lll}
       \big(1, \alpha_k (r, y_1, y_2) \big) 
       & (y_1, y_2)  \in ]0, 2^{-k} [ \times ]0, 2^{-k}[ \\
       \phantom{\int} \\
       \big(1, \alpha_k (r, y_1-2 \cdot 2^{-k}, y_2) \big) 
       & (y_1, y_2)  \in ]2 \cdot 2^{-k}, 3 \cdot 2^{-k} [ \times ]0, 2^{-k}[ \\
       \phantom{\int} \\
        \big(1, \alpha_k (r, y_1, y_2- 2 \cdot 2^{-k}) \big) 
       & (y_1, y_2)  \in  ]0, 2^{-k}[ \times  ]2 \cdot 2^{-k}, 3 \cdot 2^{-k} [\\
       \phantom{\int} \\
        \big(1, \alpha_k (r, y_1- 2 \cdot 2^{-k}, y_2- 2 \cdot 2^{-k}) \big) 
       & (y_1, y_2)  \in   
       ]2^{-k}, 3 \cdot 2^{-k} [ \times  ]2^{-k}, 3 \cdot 2^{-k} [\\
       \phantom{\int} \\
       (-5, 0, 0) & (y_1, y_2) \in B_k \\
       \phantom{\int} \\
       (0, 0, 0) & \text{elsewhere in $S_k$} \\
 \end{array}
\right.
\end{equation*}
Note that, basically, the definition of $\tilde \beta_k$ is obtained from~\eqref{e:betak1} by changing the value of the vector field on $D_k$ and inserting as a component in the $(y_1, y_2)$-directions the vector field $\alpha_k$ constructed in \S~\ref{sss:ldp}. 

Also, we define the function $u_k$ by setting 
\begin{equation*}
 u_k (r, y_1, y_2): =
\left\{
 \begin{array}{lll}
       z_k (r, y_1, y_2) 
       & (y_1, y_2)  \in ]0, 2^{-k} [ \times ]0, 2^{-k}[ \\
       \phantom{\int} \\
       z_k (r, y_1- 2 \cdot 2^{-k}, y_2)  
       & (y_1, y_2)  \in ]2 \cdot 2^{-k}, 3 \cdot 2^{-k} [ \times ]0, 2^{-k}[ \\
       \phantom{\int} \\
       z_k (r, y_1, y_2- 2 \cdot 2^{-k}) 
       & (y_1, y_2)  \in  ]0, 2^{-k}[ \times  ]2 \cdot 2^{-k}, 3 \cdot 2^{-k} [\\
       \phantom{\int} \\
        z_k (r, y_1- 2 \cdot 2^{-k}, y_2- 2 \cdot 2^{-k}) 
       & (y_1, y_2)  \in   
       ]2 \cdot 2^{-k}, 3 \cdot 2^{-k} [ \times  
       ]2 \cdot 2^{-k}, 3 \cdot 2^{-k} [\\
       \phantom{\int} \\
       0  & \text{elsewhere in $S_k$}, \\
 \end{array}
\right.
\end{equation*}
where $z_k$ is the same function as in \S~\ref{sss:ldp}. 

{\sc Step B:} we define the vector field $\tilde \beta_k$ and the solution $u_k$ for $(r, y_1, y_2) \in ]2^{-k}, 2^{2-k}[ \times S_k$. 

We set 
$
   \tilde \beta_k (r, y_1, y_2) : = \beta_k  (r, y_1, y_2) 
$, where $\beta_k$ denotes the same vector field as in \S~\ref{sss:betak}. The function $u_k$ satisfies 
$$
    \partial_r u_k + \divy (\tilde \beta_k u_k) =0.
$$
Since $\divy \tilde \beta_k =0$, the values attained by $u_k$ for $(r, y_1, y_2) \in ]2^{-k}, 2^{2-k}[ \times S_k$ can be computed by the classical method of characteristics. To provide an heuristic intuition of the behavior of $u_k$, we refer to Figure~\ref{f:s1}, center and right part, and we point out that $u_k$ attains the value $0$ on white and black areas, while on dashed areas it attains the same values as in Figure~\ref{f:dp1}, right part. 
 
 {\sc Step C:} we extend $\tilde \beta_k$ and $u_k$ to $]0, 2^{2-k}[ \times \R^2$ by periodicity by proceeding as in~\eqref{e:per}.  
 
 {\sc Step D:} we finally define a vector field $b$ and the function $u$. We recall the decomposition~\eqref{e:dec} and we define $b$ as in~\eqref{e:brt} and~\eqref{e:btr}, replacing $\beta_k$ with $\tilde \beta_k$. Also, we define $u$ by setting 
 $$
 u(t, r, y_1, y_2) =
 \left\{
 \begin{array}{lll}
            u_k (r_, y_1, y_2) & \text{in $\Lambda^-$, when $r \in I_k$} \\
            0 & \text{in $\Lambda^+$}. 
 \end{array}
 \right.
 $$  
 By arguing as in the proof of Theorem~\ref{t:cex2}, one can show that $u$ and $b$ satisfy requirements i), $\dots$, v) in the statement of Theorem~\ref{t:cex2} and that moreover $\tr (bu) \equiv 0$. This concludes the proof of Corollary~\ref{c:dpt}. \hfill \qed
 
\section*{Acnkowledgments} 
The construction of the counter-examples exhibited in the proofs of Theorem~\ref{t:cex2} and Corollary~\ref{c:dpt} was inspired by a related example due to Stefano Bianchini. Also, the authors wish to express their gratitude to Wladimir Neves for pointing out reference~\cite{Boyer}. Part of this work was done when Spinolo was affiliated to the University of Zurich, which she thanks for the nice hospitality. Donadello and Spinolo thank the University of Basel for the kind hospitality during their visits. Crippa is partially supported by the
SNSF grant 140232, while Donadello acknowledges partial
support from the ANR grant CoToCoLa.
\bibliographystyle{plain}
\bibliography{boundary}

\end{document}